\newtheorem{theorem}{Theorem}[section]
\newtheorem{proposition}[theorem]{Proposition}
\newtheorem{corollary}[theorem]{Corollary}
\theoremstyle{definition}
\theoremstyle{remark}
\newtheorem{remark}[theorem]{Remark}
\numberwithin{equation}{section}
\newcommand{\be}{\begin{equation}}
\newcommand{\ee}{\end{equation}}
\newcommand{\bea}{\begin{eqnarray}}
\newcommand{\eea}{\end{eqnarray}}
\newcommand{\beas}{\begin{eqnarray*}}
\newcommand{\eeas}{\end{eqnarray*}}
\newcommand{\brak}[1]{\ensuremath{\left( #1 \right)}}
\newcommand{\crl}[1]{\ensuremath{ \left\{ #1 \right\} }}
\newcommand{\edg}[1]{\ensuremath{ \left[ #1 \right] }}
\newcommand{\ang}[1]{\ensuremath{ \left \langle #1 \right \rangle }}
\begin{document}

\title{Duality for increasing convex functionals with countably many marginal constraints}

\author{D.~Bartl\thanks{Department of Mathematics, University of Konstanz, 78464 Konstanz Germany.}
\and P.~Cheridito\thanks{ORFE, Princeton University, Princeton, NJ 08544, USA. 
Partially supported by National Science Foundation grant DMS-1515753.} 
\and M.~Kupper$^*$
\and L.~Tangpi\thanks{Faculty of Mathematics, University of Vienna, 1090, Vienna Austria. 
Partially supported by Vienna Science and Technology Fund (WWTF) grant MA 14-008.}}

\date{September 2016}

\maketitle

\begin{abstract}
In this work we derive a convex dual representation for increasing convex functionals 
on a space of real-valued Borel measurable functions defined on a countable product
of metric spaces. Our main assumption is that the functionals fulfill marginal 
constraints satisfying a certain tightness condition. In the special case where the marginal constraints 
are given by expectations or maxima of expectations, we obtain linear and sublinear versions of 
Kantorovich's transport duality and the recently discovered martingale transport duality on 
products of countably many metric spaces.\\[2mm]
{\bf MSC 2010:} Primary 47H07; Secondary 46G12, 91G20\\
\textbf{Keywords}: Representation results, increasing convex functionals, 
transport problem, Kantorovich duality, model-independent finance.
\end{abstract}

\section{Introduction}

We consider an increasing convex functional $\phi : B_b \to \mathbb{R}$, where $B_b$ is the space of all bounded
Borel measurable functions $f : X \to \mathbb{R}$ defined on a countable product of metric spaces $X = \prod_n X_n$.
Under the assumption that there exist certain mappings $\phi_n$ defined on the 
bounded Borel measurable functions $g_n : X_n \to \mathbb{R}_+$, such that
$$
\phi(f)\le\sum_n\phi_n(g_n)\quad\mbox{whenever}
\quad f(x)\le\sum_n g_n(x_n)\mbox{ for all }x\in X,
$$
we show that $\phi$ can be represented as
\be \label{contrep}
\phi(f) = \max_{\mu \in ca^+} (\ang{f,\mu} - \phi^*_{C_b}(\mu)) \quad \mbox{for all } f \in C_b,
\ee
where $ca^+$ is the set of finite Borel measures, $C_b$ the set of bounded continuous functions
$f : X \to \mathbb{R}$, $\ang{f,\mu}$ the integral $\int f d\mu$, and
$\phi^*_{C_b}$ the convex conjugate defined by 
$$
\phi^*_{C_b}(\mu) := \sup_{f \in C_b} (\ang{f,\mu} - \phi(f)).
$$
We also provide equivalent conditions under which the representation \eqref{contrep}
extends to all bounded upper semicontinuous functions $f : X \to \mathbb{R}$.
In the special case, where the mappings $\phi_n$ are linear, our arguments 
can be generalized to cover functionals $\phi$ that are defined on spaces of unbounded 
functions $f : X \to \mathbb{R}$. This yields variants of the representation \eqref{contrep} for 
unbounded continuous and upper semicontinuous functions $f : X \to \mathbb{R}$.

As an application we derive versions of Kantorovich's transport duality and the recently 
discovered martingale transport duality in the case where the state space is a countable 
product of metric spaces. A standard Monge--Kantorovich transport problem consists in finding a 
probability measure on the product of two metric spaces with fixed marginals that minimizes the expectation of 
a given cost function. It is a linear optimization problem whose dual has the form of a 
subreplication problem (which, after changing the sign, becomes a superreplication problem).
Kantorovich first showed that there is no duality gap between the two problems under compactness 
and continuity assumptions in the seminal paper \cite{kanto}. Since then, 
the result has been generalized in various directions; see e.g. 
\cite{RR, Vil2, AG} for an overview. We establish linear and sublinear versions
of Kantorovich's duality for countable products of metric spaces and lower semicontinuous cost functions
(corresponding to upper semicontinuous functions $f : X \to \mathbb{R}$ in our setup).
It has been shown that in the case, where the state space is a finite product of Polish spaces, Kantorovich's
duality even holds for Borel measurable cost functions; see e.g. \cite{kellerer, bei-sch,bei-leo-sch}. However, 
we provide a counter-example illustrating that this is no longer true if the state space is 
a countable product of compact metric spaces. 

Martingale transport duality was discovered by
\cite{bei-hl-pen} and \cite{gal-hl-tou} in the context of model-independent finance by noting that 
the superreplication problem in the presence of liquid markets for European call and put options
can be viewed as the dual of a transport problem in which the optimization is carried out 
over the set of all martingale measures. While \cite{bei-hl-pen} considers a discrete-time 
model with finitely many marginal distributions, \cite{gal-hl-tou} studies a continuous-time model 
with just two marginal distributions. In this paper we obtain a martingale transport duality for 
countably many time periods and equally many marginal constraints (for martingale transport in 
continuous time, see e.g. \cite{DS2, GTT} and the references therein). Standard martingale transport duality 
describes a situation where a financial asset can be traded dynamically without transaction costs and 
any European derivative can efficiently be replicated with a static investment in European call and put options. 
From our general results, we obtain a sublinear generalization of the martingale transport duality corresponding to 
proportional transaction costs and incomplete markets of European call and put options. 
This extends the duality of \cite{DS1} to a setup with countably many time periods and 
markets for European options with all maturities. 

Our proofs differ from the standard arguments used in establishing Kantorovich duality and martingale transport duality 
in that they view the subreplication (or superreplication) problem as the primal problem and use the 
Daniell--Stone theorem to deduce that increasing convex functionals on certain function spaces have 
a max-representation with countably additive measures if they are continuous from above along point-wise 
decreasing sequences.

The rest of the paper is organized as follows: In Section \ref{sec2} we derive two general representation 
results for increasing convex functionals satisfying countably many tight marginal constraints.
In Section \ref{sec3} we focus on the special cases where the constraints are linear and sublinear.
In Section \ref{sec4} we derive linear and sublinear versions of Kantorovich's transport duality and the
martingale transport duality for countably many marginal constraints.

\section{Main representation results}\label{sec2}

Let $(X_n)$ be a countable (finite or countably infinite) family of metric spaces, and consider 
the product topology on $X = \prod_n X_n$. Denote by $C_b$, $U_b$ and $B_b$ all bounded functions 
$f: X \to \mathbb{R}$ that are continuous, upper semicontinuous or Borel measurable, respectively. 
Similarly, let $C_{b,n}$, $U_{b,n}$ and $B_{b,n}$ be all bounded functions $f :X_n \to \mathbb{R}$ that 
are continuous, upper semicontinuous or Borel measurable, respectively. By $ca^+$ we denote all finite 
Borel measures on $X$ and by $ca^+_n$ all finite Borel measures on $X_n$. For a measure $\mu \in ca^+$, 
we denote by $\mu_n$ the $n$-th marginal distribution, that is, 
$\mu_n:=\mu \circ \pi_n^{-1}$, where $\pi_n : X \to X_n$ is the projection on the $n$-th coordinate 
$x\mapsto \pi_n(x):=x_n$. For a sequence $g_n \in B^+_{b,n}$, where $B^+_{b,n}$ is the set of all 
bounded Borel measurable functions $f : X_n \to \mathbb{R}_+$, we define
$\oplus g := \sum_{n} g_n \circ \pi_n : X \to \mathbb{R}_+ \cup \crl{+\infty}$.
When we write $f_j \downarrow f$, we mean that $f_j$ is a decreasing sequence of functions that
converges point-wise to $f$.

Our goal in this section is to derive a dual representation for an increasing convex functional
$\phi : B_b \to \mathbb{R}$, where by increasing we mean that $\phi(f) \ge \phi(g)$ whenever $f \ge g$
and the second inequality is understood point-wise. For every $n$, let $\phi_n : B^+_{b,n} \to \mathbb{R}_+$ 
be a mapping satisfying the following tightness condition: for all $m,\varepsilon \in \mathbb{R}_+ \setminus \{0\}$,
there exists a compact set $K_n \subseteq X_n$ such that 
\be \label{margcond}
\phi_n(m 1_{K_n^c}) \le \varepsilon.
\ee
(In the special case where $\phi_n$ is given by $\phi_n(f) = \sup_{\nu \in \mathcal{P}_n} \int f d\nu$ for 
a set of Borel probability measures $\mathcal{P}_n$ on $X_n$, \eqref{margcond} means that
$\mathcal{P}_n$ is tight in the standard sense; see e.g. \cite{B}. A related condition for convex risk 
measures was introduced in \cite{FS}.)
We use the notation $\ang{f,\mu} := \int f d\mu$ and define the convex conjugate
$$
\phi^*_{C_b}: ca^+ \to \mathbb{R} \cup \crl{+\infty} \quad \mbox{by} \quad
\phi^*_{C_b}(\mu) := \sup_{f \in C_b} (\ang{f,\mu} - \phi(f)).
$$
Then the following holds:

\begin{theorem} \label{thm:Cb}
Let $\phi : B_b\to \mathbb{R}$ be an increasing convex functional satisfying $\phi(f) \le \sum_n \phi_n(g_n)$ 
for all $f \in B_b$ and $g_n \in B^+_{b,n}$ such that $f \le \oplus g$. Then
$$
\phi(f) = \max_{\mu \in ca^+} (\langle f,\mu \rangle - \phi^*_{C_b}(\mu))\quad \mbox{for all }f\in C_b.
$$
\end{theorem}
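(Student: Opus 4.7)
The plan is to restrict $\phi$ to the Banach space $(C_b, \|\cdot\|_\infty)$, invoke Fenchel--Moreau there to obtain a representation over the continuous dual $(C_b)^*$, then upgrade the relevant dual functionals to countably additive Borel measures via the Daniell--Stone theorem, and finally extract attainment from Banach--Alaoglu. The setup is straightforward: $\phi$ is a finite-valued convex function on the Banach space $B_b$, so it is automatically locally Lipschitz, hence sup-norm continuous, and the same holds for $\phi|_{C_b}$. Monotonicity of $\phi$ furthermore forces $\phi^*(\ell) = +\infty$ unless $\ell \in (C_b)^*$ is positive: indeed, if $\ell(h) < 0$ for some $h \ge 0$, then $\ell(-nh) - \phi(-nh) \ge -n\ell(h) - \phi(0) \to +\infty$.

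The analytic heart of the proof, and what I expect to be the main obstacle, is to show that $\phi|_{C_b}$ is continuous from above along $g_j \downarrow 0$ in $C_b$. Given $\delta \in (0,1)$ and $\varepsilon > 0$ with $m = \|g_1\|_\infty$, I would use \eqref{margcond} to pick, for each $n$, a compact $K_n \subseteq X_n$ with $\phi_n(\tfrac{m}{\delta}\,1_{K_n^c}) \le \varepsilon/2^n$. The product $K = \prod_n K_n$ is compact and metrizable, so Dini's theorem applied to the decreasing sequence of continuous functions $g_j|_K$ yields $J$ with $g_j \le \varepsilon$ on $K$ for $j \ge J$. Writing $g_j = (g_j \wedge \varepsilon) + (g_j - \varepsilon)^+$, the second summand vanishes on $K$ and is therefore bounded by $m\,1_{K^c} \le m \sum_n 1_{K_n^c} \circ \pi_n$; convexity with weights $(1-\delta, \delta)$ combined with monotonicity gives
\[
\phi(g_j) \le (1-\delta)\,\phi\!\left(\tfrac{\varepsilon}{1-\delta}\right) + \delta\,\phi\!\left(\tfrac{(g_j - \varepsilon)^+}{\delta}\right) \le (1-\delta)\,\phi\!\left(\tfrac{\varepsilon}{1-\delta}\right) + \delta\sum_n \phi_n\!\left(\tfrac{m}{\delta}\,1_{K_n^c}\right) \le (1-\delta)\,\phi\!\left(\tfrac{\varepsilon}{1-\delta}\right) + \delta\varepsilon,
\]
where the marginal constraint is applied to $(g_j - \varepsilon)^+/\delta \in B_b$ against $h_n = (m/\delta)\,1_{K_n^c}$. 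Taking $j \to \infty$, then $\varepsilon \to 0$ (using sup-norm continuity to get $\phi(\varepsilon/(1-\delta)) \to \phi(0)$), then $\delta \to 0$, gives $\limsup_j \phi(g_j) \le \phi(0)$; monotonicity supplies the reverse. The general case $f_j \downarrow f$ follows by applying this to $g_j := f_j - f$ after a convex-combination splitting.

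With this $\sigma$-continuity from above in hand, the remaining steps are standard. For a positive $\ell \in (C_b)^*$ with $\phi^*(\ell) = c < \infty$ and $t > 0$, the estimate $t\,\ell(f_j) = \ell(tf_j) \le \phi(tf_j) + c$ for $f_j \downarrow 0$ yields $\limsup_j \ell(f_j) \le (\phi(0) + c)/t$, and letting $t \to \infty$ gives $\ell(f_j) \to 0$. Since $X$ is metric (so Baire $=$ Borel), the Daniell--Stone theorem then produces a unique $\mu \in ca^+$ with $\ell(f) = \langle f, \mu\rangle$ on $C_b$ and $\phi^*(\ell) = \phi^*_{C_b}(\mu)$; hence the Fenchel--Moreau sup is effectively over $ca^+$. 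For attainment, plugging $g = 1$ into the conjugate gives $\|\ell\|_{(C_b)^*} = \ell(1) \le \phi(1) + c$, so the set $\{\ell \ge 0 : \phi^*(\ell) \le c\}$ is weak*-compact by Banach--Alaoglu; the map $\ell \mapsto \ell(f) - \phi^*(\ell)$ is weak*-upper semicontinuous on it, so its maximum is attained, and by the previous step the maximizer lies in $ca^+$.
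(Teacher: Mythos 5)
Your proof is essentially correct, and its analytic core coincides with the paper's: both arguments establish $\sigma$-continuity from above of $\phi$ on $C_b$ by choosing compacts $K_n$ via the tightness condition \eqref{margcond}, invoking Tychonoff and Dini on $K=\prod_n K_n$, and using a convex splitting with a small weight $\delta$ (resp.\ $\alpha$) so that the ``off-$K$'' part is channelled through the marginal constraint; both then pass to a countably additive measure via Daniell--Stone. Your splitting $g_j=(g_j\wedge\varepsilon)+(g_j-\varepsilon)^+$ is a clean alternative to the paper's $f_j=\tfrac12(2f_j1_K)+\tfrac12(2f_11_{K^c})$ and works just as well. The genuine difference is in the functional-analytic packaging: the paper applies Hahn--Banach to the shifted functional $\tilde\phi(\cdot)=\phi(\cdot+f)-\phi(f)$ to produce directly a positive $\psi\le\tilde\phi$, i.e.\ a subgradient at $f$, which is automatically the maximizer; you instead invoke Fenchel--Moreau on $(C_b,\|\cdot\|_\infty)$ and recover attainment from Banach--Alaoglu. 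The paper's route gets existence of the maximizer for free, while yours requires the extra compactness discussion (which, on the other hand, is exactly the input needed for Theorem \ref{thm:usc}).

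Two points to tighten. First, ``finite-valued convex on a Banach space, hence locally Lipschitz'' is false in general (everywhere-defined discontinuous linear functionals exist); here you need monotonicity to get local upper boundedness, $\phi(g)\le\phi(f+1)$ for $\|g-f\|_\infty\le1$, and only then local Lipschitz continuity. Second, in the attainment step you show each sublevel set $\{\ell\ge0:\phi^*(\ell)\le c\}$ is weak*-compact and that the objective is weak*-usc on it, but you do not verify that the supremum over all of $(C_b)^*$ is already achieved within some fixed sublevel set. This needs a coercivity estimate: from $t\ell(1)-\phi(t)\le\phi^*(\ell)$ one gets $\ell(f)\le\|f\|_\infty\ell(1)\le\|f\|_\infty\bigl(\phi(t)+\phi^*(\ell)\bigr)/t$, and choosing $t=2\|f\|_\infty$ shows that any $\ell$ with $\ell(f)-\phi^*(\ell)\ge\phi(f)-1$ satisfies $\phi^*(\ell)\le\phi(2\|f\|_\infty)-2\phi(f)+2$. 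Alternatively, skip Banach--Alaoglu altogether and take $\ell\in\partial\phi|_{C_b}(f)$, which is nonempty by continuity; this is exactly the paper's Hahn--Banach step.
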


\begin{proof}
Fix $f \in C_b$ and let $(f_j)$ be a sequence in $C_b$ such that $f_j \downarrow 0$.
Since $\alpha \mapsto \phi(\alpha f)$ is a real-valued convex function on $\mathbb{R}$,
it is continuous. So, for a given constant $\varepsilon>0$, one can choose $\alpha \in (0,1)$ small enough 
such that $$(1-\alpha)\phi \brak{\frac{f}{1-\alpha}}-\phi(f) \le\varepsilon.
$$
By assumption, there exist compact sets $K_n\subseteq X_n$ such that 
$\sum_n \phi_n(g_n)\le \varepsilon$, where
$$g_n:=\frac{2}{\alpha}\|f_1\|_\infty 1_{K_n^c}.$$
By Tychonoff's theorem, $K:=\prod_n K_n\subseteq X$ is compact.
Since the function 
$$
\tilde\phi(\cdot):= \phi(\cdot + f)-\phi(f) : B_b \to \mathbb{R}
$$ is convex, one has
$$
\tilde\phi(f_j) \leq \frac{\tilde{\phi}(2 f_j 1_K)+\tilde{\phi} (2 f_1 1_{K^c})}{2}.
$$
By Dini's lemma, $f_j \to 0$ uniformly on the compact $K$. 
So, since $\lim_{\alpha \to 0} \tilde{\phi}(\alpha 1) = 0$, it follows by monotonicity that
$\tilde{\phi}(2 f_j 1_K) \to 0$. On the other hand, one obtains from 
$\frac{2}{\alpha} f_1 1_{K^c}\le \oplus g$ that
$$\phi \Big(\frac{2}{\alpha} f_1 1_{K^c}\Big)\le \sum_n \phi_n(g_n)\le\varepsilon,$$ and therefore,
$$
\tilde\phi(2f_1 1_{K^c})\le\alpha \phi\Big(\frac{2}{\alpha}f_1 1_{K^c}\Big)
+(1-\alpha)\phi\Big(\frac{f}{1-\alpha}\Big)-\phi(f)\le 2\varepsilon.
$$
This shows $\phi(f+f_j) \downarrow \phi(f)$. By the Hahn--Banach extension theorem, 
there exists a positive linear functional $\psi : C_b \to \mathbb{R}$ such that 
$$
\psi(g)\le \tilde{\phi}(g) = \phi(f+g) - \phi(f)\quad\mbox{for all } g \in C_b.
$$
Since $\psi(g_j) \downarrow 0$ for every sequence $(g_j)$ in $C_b$ satisfying $g_j \downarrow 0$,
one obtains from the Daniell--Stone theorem (see e.g., Theorem 4.5.2 in \cite{D}) that there exists a $\nu \in ca^+$ such that 
$\psi(g) = \ang{g,\nu}$ for all $g \in C_b$. It follows that $\phi(f) + \phi^\ast_{C_b}(\nu) \le \ang{f,\nu}$,
which together with $\phi(f) \ge \sup_{\mu \in ca^+} (\ang{f,\mu} - \phi^*_{C_b}(\mu))$ yields
$$
\phi(f) = \max_{\mu\in ca^+} (\langle f,\mu\rangle - \phi^\ast_{C_b}(\mu)).
$$
\end{proof}

The next result gives conditions under which the dual representation of Theorem \ref{thm:Cb} extends to 
the set of bounded upper semicontinuous functions $U_b$. We call a subset $\Lambda$ of $ca^+$ 
sequentially compact if every sequence in $\Lambda$ has a subsequence that converges to some 
$\mu \in \Lambda$ with respect to the topology $\sigma(ca^+,C_b)$.

\begin{theorem} \label{thm:usc}
Let $\phi : B_b \to \mathbb{R}$ be an increasing convex functional satisfying the assumption of 
Theorem \ref{thm:Cb}. Then the lower level sets 
\[\Lambda_a := \{\mu \in ca^+ : \phi^*_{C_b}(\mu) \le a\}, \quad a \in \mathbb{R},\] 
are sequentially compact, and the following are equivalent:
\begin{itemize}
\item[{\rm (i)}] $\phi(f) = \max_{\mu \in ca^+} (\langle f,\mu\rangle  - \phi^*_{C_b}(\mu))$ for all $f \in U_b$
\item[{\rm (ii)}] $\phi(f_j) \downarrow \phi(f)$ for all $f \in U_b$ and every  sequence $(f_j)$ in $C_b$ satisfying $f_j \downarrow f$
\item[{\rm (iii)}] $\phi(f) = \inf_{g \in C_b, \, g \ge f} \phi(g)$ for all $f \in U_b$
\item[{\rm (iv)}] $\phi^*_{C_b}(\mu) = \phi^*_{U_b}(\mu):=\sup_{f\in U_b}\left(\ang{f,\mu} -\phi(f)\right)$ for all $\mu \in ca^+$.
\end{itemize}
\end{theorem}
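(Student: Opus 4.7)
The plan is to first establish sequential compactness of each level set $\Lambda_a$ through a tightness argument, and then close the equivalences via the cycle $(\mathrm{i})\Rightarrow(\mathrm{iv})\Rightarrow(\mathrm{ii})\Leftrightarrow(\mathrm{iii})\Rightarrow(\mathrm{iv})$ together with $(\mathrm{ii})\Rightarrow(\mathrm{i})$, with compactness entering the two non-trivial limit arguments.

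For sequential compactness of $\Lambda_a$, note that $\phi^*_{C_b}$ is $\sigma(ca^+,C_b)$-lower semicontinuous as a supremum of affine continuous maps, so $\Lambda_a$ is closed and it suffices to prove uniform mass-boundedness and tightness. Testing with constants gives $\phi^*_{C_b}(\mu)\ge M\mu(X)-\phi(M)$, hence $\mu(X)\le(\phi(M)+a)/M$ for any $M>0$. For tightness, given $\delta>0$ set $\lambda:=(1+a)/\delta$ and use \eqref{margcond} to pick compact $K_n\subseteq X_n$ with $\phi_n(\lambda 1_{K_n^c})\le 2^{-n}$. Then $K:=\prod_n K_n$ is compact by Tychonoff, and $\lambda 1_{K^c}\le\lambda\sum_n 1_{K_n^c}\circ\pi_n$, so the standing hypothesis of Theorem \ref{thm:Cb} yields $\phi(\lambda f)\le 1$ for every $f\in C_b$ with $0\le f\le 1_{K^c}$; therefore $\lambda\ang{f,\mu}\le 1+a$ for $\mu\in\Lambda_a$. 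Outer regularity of $\mu$ on the open set $K^c$ gives $\mu(K^c)\le\delta$ uniformly. Prokhorov's theorem on metric spaces then produces sequential relative compactness, and limits remain in $\Lambda_a$ by lower semicontinuity.

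The easy implications are as follows. $(\mathrm{i})\Rightarrow(\mathrm{iv})$: the max formula gives $\phi^*_{C_b}(\mu)\ge\ang{f,\mu}-\phi(f)$ for every $f\in U_b$, hence $\phi^*_{C_b}\ge\phi^*_{U_b}$, while the reverse is automatic from $C_b\subseteq U_b$. $(\mathrm{ii})\Leftrightarrow(\mathrm{iii})$: every bounded upper semicontinuous $f$ on a metric space is the pointwise decreasing limit of bounded continuous ones, e.g.\ via Moreau--Yosida regularisations $g_j(x):=\sup_y(f(y)-j\,d(x,y))$. $(\mathrm{iii})\Rightarrow(\mathrm{iv})$ is the one-line computation
\[
\ang{f,\mu}-\phi(f)=\sup_{g\in C_b,\,g\ge f}\bigl(\ang{f,\mu}-\phi(g)\bigr)\le\sup_{g\in C_b,\,g\ge f}\bigl(\ang{g,\mu}-\phi(g)\bigr)\le\phi^*_{C_b}(\mu).
\]

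The technical core is $(\mathrm{iv})\Rightarrow(\mathrm{ii})$. Fix $f\in U_b$ and $g_j\in C_b$ with $g_j\downarrow f$, and set $L:=\lim_j\phi(g_j)\ge\phi(f)$. By Theorem \ref{thm:Cb} applied to each $g_j$, choose $\mu_j\in ca^+$ with $\phi(g_j)=\ang{g_j,\mu_j}-\phi^*_{C_b}(\mu_j)$. With $M_0:=\|g_1\|_\infty$, combining $\phi^*_{C_b}(\mu_j)\le M_0\mu_j(X)-\phi(f)$ with $\mu_j(X)\le(\phi^*_{C_b}(\mu_j)+\phi(2M_0))/(2M_0)$ produces a uniform bound $\phi^*_{C_b}(\mu_j)\le a$, so $\mu_j\in\Lambda_a$; extract $\mu_{j_k}\to\mu^*\in\Lambda_a$ in $\sigma(ca^+,C_b)$. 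The delicate point is $\limsup_k\ang{g_{j_k},\mu_{j_k}}$, in which both factors vary. One freezes an index $\ell$ first: for $j_k\ge\ell$ the monotonicity $g_{j_k}\le g_\ell$ and continuity of $g_\ell$ give $\limsup_k\ang{g_{j_k},\mu_{j_k}}\le\lim_k\ang{g_\ell,\mu_{j_k}}=\ang{g_\ell,\mu^*}$, after which $\ell\to\infty$ and monotone convergence (valid because $\mu^*$ is finite) yield $\limsup_k\ang{g_{j_k},\mu_{j_k}}\le\ang{f,\mu^*}$. Together with $\phi^*_{C_b}(\mu^*)\le\liminf_k\phi^*_{C_b}(\mu_{j_k})$ and $(\mathrm{iv})$, which implies $\ang{f,\mu^*}-\phi^*_{C_b}(\mu^*)\le\phi(f)$, this forces $L\le\phi(f)$, hence $L=\phi(f)$. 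The same construction gives $(\mathrm{ii})\Rightarrow(\mathrm{i})$: the limit $\mu^*$ attains $\phi(f)=\ang{f,\mu^*}-\phi^*_{C_b}(\mu^*)$, and by $(\mathrm{iv})$ every other $\mu$ satisfies $\ang{f,\mu}-\phi^*_{C_b}(\mu)\le\phi(f)$, identifying $\mu^*$ as the required maximiser. I expect the freeze-then-unfreeze limsup manoeuvre to be the main obstacle: a direct Portmanteau estimate against the varying $\mu_{j_k}$ would only give $\limsup_k\ang{f,\mu_{j_k}}\le\ang{f,\mu^*}$, which has the wrong sign; it is the monotonicity of the approximating $g_j$ that makes upper semicontinuity of $f$ suffice.
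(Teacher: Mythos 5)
Your proposal is correct and follows essentially the same route as the paper: tightness of $\Lambda_a$ via the marginal condition, Tychonoff and approximation of $1_{K^c}$ by continuous functions, Prokhorov plus lower semicontinuity of $\phi^*_{C_b}$, and the freeze-the-index $\limsup$ estimate combined with subsequence extraction in $\Lambda_a$ for the hard implications. The only differences are cosmetic: you order the cycle of implications differently and replace the paper's $\gamma$/$\gamma^{-1}$ device for the uniform bound on $\phi^*_{C_b}(\mu_j)$ by a direct two-inequality computation.
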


\begin{proof} 
It is clear that for all $a \in \mathbb{R}$, $\Lambda_a$ is $\sigma(ca^+,C_b)$-closed.
Moreover, for all $\mu \in ca^+$,
$$
\phi^*_{C_b}(\mu) \ge \sup_{x \in \mathbb{R}_+} (\ang{x1,\mu} - \phi(x1))
= \gamma(\ang{1,\mu}),$$
where $\gamma : \mathbb{R}_+ \to \mathbb{R} \cup \{+ \infty\}$ is the increasing convex function 
given by 
$$
\gamma(y) := \sup_{x \in \mathbb{R}_+} (xy - \phi(x1)).
$$
Since $\phi$ is real-valued, $\gamma$ has the property $\lim_{y \to + \infty} \gamma(y)/y = +\infty$, 
from which it follows that the right-continuous inverse $\gamma^{-1} : \mathbb{R} \to \mathbb{R}_+$ given by
$$\gamma^{-1}(x) := \sup \{y \in \mathbb{R}_+ : \gamma(y) \le x\} \quad \mbox{with} \quad\sup \emptyset := 0,
$$
is increasing and satisfies $\lim_{x \to +\infty} \gamma^{-1}(x)/x = 0$.
For every $\varepsilon>0$ there exist $m\in\mathbb{N}$ such that $(a+1)/m\le\varepsilon$
and compact sets $K_n\subseteq X_n$ so that $\sum_n \phi_n(m 1_{K^c_n})\le 1$.
Since $m 1_{K^c}\le\oplus g$ for the compact $K:= \prod_n K_n$ and $g_n:=m 1_{K^c_n}$, one has  
$\phi(m1_{K^c})\le\sum_n\phi_n(m 1_{K_n^c})\le 1$. Moreover, the 
product topology on $X$ is metrizable and $m 1_{K^c}$ is lower semicontinuous.
Therefore, there exists a sequence $(g_j)$ in $C_b$ such that $g_j \uparrow m 1_{K^c}$. Since 
$\phi(g_j) \le \phi(m 1_{K^c}) \le 1$, one has for all $\mu \in \Lambda_a$,
$$
m\mu(K^c) = \sup_j \ang{g_j,\mu} \le \sup_j (\ang{g_j,\mu} -\phi(g_j)+1) \le \phi^*_{C_b}(\mu)+1 \le a +1.
$$
In particular, $\mu(K^c) \le \varepsilon$ and $\mu(X)= \ang{1,\mu} \le\gamma^{-1}\big(\phi^\ast_{C_b}(\mu)\big)\le\gamma^{-1}(a)$. 
Now one obtains from the first half of Prokhorov's theorem 
(see e.g. Theorem 5.1 in \cite{B}) that $\Lambda_a$ is sequentially compact.

(i) $\Rightarrow $ (ii): 
Fix $f \in U_b$ and assume $(f_j)$ is a sequence in $C_b$ such that $f_j \downarrow f$. If (i) holds, 
there exists a sequence $(\mu_j)$ in $ca^+$ such that
$$
\phi(f_j)= \ang{f_j,\mu_j} - \phi^*_{C_b}(\mu_j) \le \|f_1\|_\infty \ang{1, \mu_j} - \phi^*_{C_b}(\mu_j)
\le \|f_1\|_\infty \gamma^{-1}(\phi^*_{C_b}(\mu_j)) - \phi^*_{C_b}(\mu_j).
$$
It follows that $(\mu_j)$ is in $\Lambda_a$ for some $a \in \mathbb{R}$ large enough. 
Therefore, after possibly passing to a subsequence, $\mu_j$ converges to a measure $\mu \in \Lambda_a$
in $\sigma(ca^+, C_b)$. Clearly, $\phi^*_{C_b}$ is $\sigma(ca^+, C_b)$-lower semicontinuous, and so
$$
\phi^*_{C_b}(\mu) \le \liminf_j \phi^*_{C_b}(\mu_j).
$$
Moreover, for every $\varepsilon > 0$, there is a $k$ such that 
$\ang{ f_k, \mu} \le \ang{ f,\mu} + \varepsilon$.
Now choose $j \ge k$ such that $\ang{f_k, \mu_j} \le \ang{f_k, \mu} + \varepsilon$.
Then 
$$
\ang{f_j, \mu_j} \le \ang{f_k,\mu_j} \le \ang{f_k, \mu}  + \varepsilon \le \ang{f,\mu} + 2 \varepsilon.
$$
It follows that $\limsup_j \ang{f_j,\mu_j} \le \ang{f,\mu}$, and therefore,
$$
\lim_j \phi(f_j) = \lim_j (\ang{f_j,\mu_j}  - \phi^*_{C_b}(\mu_j)) 
\le \ang{f,\mu} - \phi^*_{C_b}(\mu) \le \phi(f),
$$
showing that $\phi(f_j) \downarrow \phi(f)$.

(ii) $\Rightarrow$ (iii) follows from the fact that for every 
$f \in U_b$, there exists a sequence $(f_j)$ in $C_b$ such that $f_j \downarrow f$. 

(iii) $\Rightarrow$ (iv): 
It is immediate from the definitions that $\phi^*_{U_b} \ge \phi^*_{C_b}$.
On the other hand, if (iii) holds, then for every $f \in U_b$, there is a sequence $(f_j)$ in $C_b$ such that 
$f_j \ge f$ and $\phi(f_j) \downarrow \phi(f)$. In particular, 
$$
\sup_j (\ang{f_j, \mu} - \phi(f_j)) \ge \ang{f,\mu} - \phi(f),
$$
from which one obtains $\phi^*_{C_b} \ge \phi^*_{U_b}$.

(iv) $\Rightarrow$ (i):
Fix $f \in U_b$. It is a direct consequence of the definition of $\phi^*_{U_b}$ that
$$
\phi(f) \ge \sup_{\mu \in ca^+} (\langle f,\mu\rangle - \phi_{U_b}^*(\mu)) = 
\sup_{\mu \in ca^+} (\langle f,\mu\rangle - \phi_{C_b}^*(\mu)).
$$
On the other hand, there exists a sequence $(f_j)$ in $C_b$ such that $f_j \downarrow f$.
Since 
$$
\ang{f_j,\mu} \le \ang{f_1,\mu} \le \|f_1\|_{\infty} \ang{1,\mu} \le \|f_1\|_{\infty}
\gamma^{-1}\big(\phi^\ast_{C_b}(\mu)\big),
$$ it follows from Theorem 
\ref{thm:Cb} that one can choose $a \in \mathbb{R}$ large enough such that 
$$
\phi(f_j) = \ang{f_j, \mu_j} - \phi^*_{C_b}(\mu_j)
$$
for a sequence $(\mu_j)$ in the sequentially compact set $\Lambda_a$. After passing to a 
subsequence, $\mu_j$ converges to a $\mu$ in $\sigma(ca^+, C_b)$. 
Then it follows as above that
$$
\phi(f) \le \lim_j \phi(f_j) = \lim_j (\ang{f_j, \mu_j}- \phi^*_{C_b}(\mu_j)) 
\le \ang{f, \mu}  - \phi^*_{C_b}(\mu), 
$$
from which one obtains $\phi(f) = \max_{\mu \in ca^+} (\langle f,\mu\rangle - \phi^*_{C_b}(\mu))$.
\end{proof}

\section{Linear and sublinear marginal constraints} 
\label{sec3}

In this section we assume the $X_n$ to be Polish spaces and the
mappings $\phi_n : B^+_{b,n} \to \mathbb{R}$ to be of the form 
$$
\phi_n(g) = \sup_{\nu_n \in \mathcal{P}_n} \ang{g,\nu_n},
$$
where $\mathcal{P}_n$ is a non-empty convex $\sigma(ca^+_n, C_{b,n})$-compact set of Borel probability measures
on $X_n$. Then all $\phi_n$ are increasing and sublinear.
Moreover, they have the translation property 
$$
\phi_n(g+m) = \phi_n(g) + m, \quad g \in B_{b,n}, \; m \in \mathbb{R},
$$ and it follows from Prokhorov's theorem that they satisfy the tightness condition \eqref{margcond};
see e.g. \cite{B}. By $\mathcal{P}$ we denote the set of Borel probability measures $\mu$ on 
the product $X = \prod_n X_n$ whose marginal distributions $\mu_n:=\mu\circ\pi_n^{-1}$ 
are in $\mathcal{P}_n$ for all $n$. Under these circumstances the following holds:

\begin{proposition} \label{prop:sub}
Let $\phi : B_b \to \mathbb{R}$ be an increasing convex functional satisfying
\begin{equation}\label{eq:dom}
\phi(f) \le m + \sum_{n} \phi_n(g_n)
\end{equation}
whenever $f \le m + \oplus g$ for some $m \in \mathbb{R}$ and $g_n \in B^+_{b,n}$. Then 
\begin{equation}\label{rep1}
\phi(f)=\max_{\mu\in \mathcal{P}}(\langle f,\mu \rangle-\phi^\ast_{C_b}(\mu))\quad\mbox{for all } f\in C_b.
\end{equation}
If in addition, $\phi^\ast_{C_b}(\mu)=\phi^\ast_{U_b}(\mu)$ for all $\mu \in \mathcal{P}$, the 
representation \eqref{rep1} extends to all $f\in U_b$.
\end{proposition}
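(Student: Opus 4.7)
The plan is to invoke Theorems \ref{thm:Cb} and \ref{thm:usc} and then show that $\phi^\ast_{C_b}(\mu)=+\infty$ for every $\mu\in ca^+\setminus\mathcal P$, so that the representing maximum concentrates automatically on $\mathcal P$.

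Setting $m=0$ in \eqref{eq:dom} recovers the hypothesis of Theorem \ref{thm:Cb}, so the $C_b$-representation with an unrestricted maximum over $ca^+$ is already at hand. The key step is then a one-coordinate bound $\phi(g\circ\pi_n)\le\phi_n(g)$ for every $g\in B_{b,n}$, even when $g$ takes negative values. The trick is to peel off a constant: since $g\circ\pi_n=-\|g\|_\infty+(g+\|g\|_\infty)\circ\pi_n$ fits the form $m+\oplus g'$ with $m=-\|g\|_\infty$, $g'_n=g+\|g\|_\infty\ge 0$ and $g'_k\equiv 0$ for $k\neq n$, the assumption \eqref{eq:dom} combined with the translation property $\phi_n(g+c)=\phi_n(g)+c$ yields exactly the bound. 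Consequently, for any $\mu\in ca^+$ and $g\in C_{b,n}$,
$$
\phi^\ast_{C_b}(\mu)\ge\ang{g\circ\pi_n,\mu}-\phi(g\circ\pi_n)\ge\ang{g,\mu_n}-\phi_n(g).
$$
If $\mu_n\notin\mathcal P_n$, Hahn--Banach separation (applicable since $\mathcal P_n$ is convex and $\sigma(ca^+_n,C_{b,n})$-compact, hence closed in that Hausdorff locally convex topology) produces some $g\in C_{b,n}$ with $\ang{g,\mu_n}>\sup_{\nu\in\mathcal P_n}\ang{g,\nu}=\phi_n(g)$; positive homogeneity of $\phi_n$ together with the rescaling $g\mapsto tg$, $t\to\infty$, then forces $\phi^\ast_{C_b}(\mu)=+\infty$. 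Since $\mu_n\in\mathcal P_n$ for any single $n$ entails $\mu(X)=\mu_n(X_n)=1$, every $\mu\in ca^+\setminus\mathcal P$ is covered and \eqref{rep1} follows.

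For the extension to $U_b$, the trivial inequality $\phi^\ast_{U_b}\ge\phi^\ast_{C_b}$ (from $C_b\subseteq U_b$) forces both conjugates to equal $+\infty$ on $ca^+\setminus\mathcal P$, so the added hypothesis $\phi^\ast_{C_b}=\phi^\ast_{U_b}$ on $\mathcal P$ upgrades to equality on all of $ca^+$, which is exactly condition (iv) of Theorem \ref{thm:usc}. Implication (iv)$\Rightarrow$(i) then delivers the $U_b$-representation with max over $ca^+$, and the same exclusion argument restricts this max to $\mathcal P$. I expect the one-coordinate bound to be the main obstacle: it is the unique place where both the translation invariance of the sublinear $\phi_n$ and the free parameter $m$ in \eqref{eq:dom} --- absent from the hypothesis of Theorem \ref{thm:Cb} --- must be used in tandem, and this combination is what makes the sublinear case noticeably more tractable than the general framework of Section \ref{sec2}.
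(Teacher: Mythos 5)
Your proof is correct and follows essentially the same route as the paper: reduce to Theorems \ref{thm:Cb} and \ref{thm:usc}, then show $\phi^*_{C_b}(\mu)=+\infty$ for $\mu\in ca^+\setminus\mathcal{P}$ via Hahn--Banach separation of a bad marginal combined with positive homogeneity of $\phi_n$. The only (harmless) difference is that the paper treats non-probability measures as a separate case by testing against constants, whereas your one-coordinate bound $\phi(g\circ\pi_n)\le\phi_n(g)$ (obtained by peeling off $\|g\|_\infty$ and using the translation property) lets you absorb that case into the separation argument, since a non-probability $\mu$ automatically has $\mu_n\notin\mathcal{P}_n$.
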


\begin{proof}
One obtains from Theorem \ref{thm:Cb} that 
$$
\phi(f) = \max_{\mu \in ca^+} (\ang{f,\mu} - \phi^*_{C_b}(\mu)) \quad \mbox{for all } f \in C_b,
$$
and from Theorem \ref{thm:usc} that the representation holds for all $f \in U_b$ if 
$\phi^*_{C_b} = \phi^*_{U_b}$. So the proposition follows if we can show 
that $\phi^*_{C_b}(\mu) = + \infty$ for all $\mu \in ca^+ \setminus \mathcal{P}$. 
To do that we fix a $\mu \in ca^+ \setminus \mathcal{P}$. If it is not a probability measure, then
$$
\phi_{C_b}^*(\mu) \ge \sup_{m \in\mathbb{R}} (\ang{m,\mu} - \phi(m)) \ge \sup_{m \in\mathbb{R}} (\ang{m,\mu} - m) = + \infty.
$$
On the other hand, if $\mu$ is a probability measure, but does not belong to $\mathcal{P}$, one
obtains from the Hahn--Banach separation theorem 
that there exist $n$ and $g_n \in C_{b,n}$ such that $\ang{g_n ,\mu_n} > \phi_n(g_n)$.
Moreover, since $\phi_n$ has the translation property, $g_n$ can be shifted until it is non-negative.
Then
$$
\phi(m g_n \circ \pi_n) \le \phi_n(m g_n) = m \phi_n(g_n) \quad \mbox{for all } m \in \mathbb{R}_+,
$$
and therefore,
$$
\phi^*_{C_b}(\mu) \ge \sup_{m \in \mathbb{R}_+} (\ang{m g_n \circ \pi_n, \mu} - \phi(m g_n \circ \pi_n))
\ge \sup_{m \in \mathbb{R}_+} m(\ang{g_n, \mu_n} - \phi_n(g_n)) = + \infty.
$$
\end{proof}

In the next step we concentrate on the special case where every $\mathcal{P}_n$ 
consists of just one Borel probability measure $\nu_n$ on $X_n$. 
Then the mappings $\phi_n$ are of the form $\phi_n(g) = \ang{g,\nu_n}$. In particular, they are linear, and 
the representation \eqref{rep1} can be extended to unbounded functions $f$.

Let us denote by $\mathcal{P}(\nu)$ the set of all Borel probabilities on $X$ with marginals 
$\mu_n = \nu_n$. Furthermore, let $B$ be the space of all Borel measurable functions 
$f : X \to \mathbb{R}$, $U$ the subset of 
upper semicontinuous functions $f : X \to \mathbb{R}$ and $B^+_n$ the set of all 
Borel measurable functions $f : X_n \to \mathbb{R}_+$. Consider the following sets:
\beas
G(\nu) &:= &\Big\{\oplus g : (g_n) \in \prod_{n} B^+_n \mbox{ such that } 
\sum_{n} \langle g_n,\nu_n \rangle < +\infty \Big\}\\
B(\nu) &:=& \{f \in B: |f| \leq \oplus g \mbox{ for some } \oplus g \in G(\nu)\}\\
U(\nu) &:=& \{f\in U: f^+\in B_b \mbox{ and } f^- \in B(\nu) \}.
\eeas
Note that $G(\nu)$ is not contained in $B(\nu)$ since a function $\oplus g \in G(\nu)$ 
can take on the value $+\infty$. But one has $\ang{\oplus g, \mu} = \sum_n \ang{g_n, \nu_n} < + \infty$
for all $\oplus g \in G(\nu)$ and $\mu \in \mathcal{P}(\nu)$. So $G(\nu)$ is contained in $L^1(\mu)$, and
every $\oplus g \in G(\nu)$ is finite $\mu$-almost surely.

\begin{proposition} \label{prop:linear}
Let $\phi : B(\nu) \to \mathbb{R}$ be increasing and convex such that
\begin{equation}\label{eq:dom2}
\phi(f) \le m + \sum_{n} \langle g_n,\nu_n \rangle
\end{equation}
if $f \le m+\oplus g$ for some $m \in \mathbb{R}$ and $\oplus g\in G(\nu)$.
Moreover, assume that 
$$\phi^\ast_{C_b}(\mu)=\phi^\ast_{U(\nu)}(\mu):=\sup_{f\in U(\nu)}\left(\langle f,\mu \rangle-\phi(f)\right)
\quad \mbox{for all } \mu\in \mathcal{P}(\nu).
$$ 
Then
$$
\phi(f)=\max_{\mu \in \mathcal{P}(\nu)}(\langle f,\mu \rangle-\phi^\ast_{C_b}(\mu))\quad
\mbox{for all } f \in B(\nu) \cap (U(\nu) + G(\nu)).
$$
\end{proposition}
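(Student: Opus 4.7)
The plan is to build the representation in three stages: first on $U_b$ using Proposition~\ref{prop:sub}, then on $U(\nu)$ by monotone approximation, and finally on $U(\nu)+G(\nu)$ by separately controlling the $\oplus h$-component. Since $C_b\subseteq U_b\subseteq U(\nu)$, the hypothesis $\phi^*_{C_b}=\phi^*_{U(\nu)}$ on $\mathcal{P}(\nu)$ forces $\phi^*_{C_b}=\phi^*_{U_b}$ there, so Proposition~\ref{prop:sub} applied to $\phi|_{B_b}$ immediately gives $\phi(f)=\max_{\mu\in\mathcal{P}(\nu)}(\langle f,\mu\rangle-\phi^*_{C_b}(\mu))$ for every $f\in U_b$.

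Next I would extend this to all $u\in U(\nu)$ by the Prokhorov/Daniell--Stone procedure underlying Theorem~\ref{thm:usc}: set $u_j:=u\vee(-j)\in U_b$, pick optimizers $\mu_j\in\mathcal{P}(\nu)$ for the representation of $\phi(u_j)$, uniformly bound $\phi^*_{C_b}(\mu_j)\le\|u^+\|_\infty-\phi(u)$, extract a subsequence $\mu_{j_k}\to\mu^*\in\mathcal{P}(\nu)$ in $\sigma(ca^+,C_b)$ via the sequential compactness of $\Lambda_a$, and combine the $\sigma(ca^+,C_b)$-lower semicontinuity of $\phi^*_{C_b}$ with the Portmanteau theorem applied to the u.s.c.\ $u_{j_k}$ and dominated convergence using $u^-\le\oplus k\in L^1(\mu^*)$ to conclude $\phi(u)=\langle u,\mu^*\rangle-\phi^*_{C_b}(\mu^*)$. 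As a by-product the monotone sequence $\phi(u_j)$ decreases to $\phi(u)$.

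The heart of the proof is then the identity $\phi(u+\oplus h)=\phi(u)+S$ with $S:=\sum_n\langle h_n,\nu_n\rangle$; combined with $\langle\oplus h,\mu\rangle=S$ for $\mu\in\mathcal{P}(\nu)$ and the $U(\nu)$-representation of $\phi(u)$, it delivers the proposition with the maximizer inherited from the one for $u$. The upper bound is soft: writing $u+\oplus h=\alpha(u/\alpha)+(1-\alpha)(\oplus h/(1-\alpha))$ for $\alpha\in(0,1)$ and invoking convexity of $\phi$ together with the domination bound $\phi(\oplus h/(1-\alpha))\le S/(1-\alpha)$ supplied by \eqref{eq:dom2}, I obtain $\phi(u+\oplus h)\le\alpha\phi(u/\alpha)+S$; letting $\alpha\uparrow 1$, the continuity of the real-valued convex function $\beta\mapsto\phi(\beta u)$ at $\beta=1$ gives $\alpha\phi(u/\alpha)\to\phi(u)$ and closes this direction.

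The hard part will be the matching lower bound, which I plan to handle by upper-semicontinuous inner approximation of each $h_n$. Since $\nu_n$ is Radon on the Polish space $X_n$, for any $\varepsilon>0$ there exist bounded u.s.c.\ $\tilde h_n:X_n\to[0,\infty)$ with $\tilde h_n\le h_n$ and $\langle h_n-\tilde h_n,\nu_n\rangle<\varepsilon/2^n$; then $v_N:=\sum_{n\le N}\tilde h_n\circ\pi_n$ is bounded and u.s.c., and $u+v_N$ still lies in $U(\nu)$ (its positive part is bounded and its negative part is controlled by $u^-\in B(\nu)$). Applying the $U(\nu)$-representation from the previous step to $u+v_N$ and using $\langle v_N,\mu\rangle=\sum_{n\le N}\langle\tilde h_n,\nu_n\rangle$ for every $\mu\in\mathcal{P}(\nu)$, the maximum collapses to $\phi(u+v_N)=\phi(u)+\sum_{n\le N}\langle\tilde h_n,\nu_n\rangle$. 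Monotonicity of $\phi$ together with $u+v_N\le u+\oplus h$ then yields $\phi(u+\oplus h)\ge\phi(u)+S-\varepsilon$, and sending $\varepsilon\to 0$ completes the argument. The most delicate point is the inner regularity approximation of a Borel non-negative integrand by bounded u.s.c.\ functions with near-equal integral, which is classical but must be invoked with the right monotone-class or truncation argument.
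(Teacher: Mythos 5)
Your argument is correct, and its first two stages essentially reproduce the paper's reasoning: the paper approximates $f\in U(\nu)$ by a decreasing sequence in $C_b$ and reruns the compactness argument of Theorem~\ref{thm:usc}, while your truncations $u\vee(-j)\in U_b$ combined with the Portmanteau inequality for bounded upper semicontinuous integrands achieve the same conclusion. The genuine divergence is in how you establish the additivity $\phi(u+\oplus h)=\phi(u)+\sum_n\langle h_n,\nu_n\rangle$. The paper obtains this purely algebraically: by Hahn--Banach, $\phi(f)=\max_{\psi\in B'(\nu)}(\psi(f)-\phi^\ast(\psi))$ over the algebraic dual of $B(\nu)$, and every $\psi$ with $\phi^\ast(\psi)<+\infty$ satisfies $\psi(\oplus g)=\sum_n\langle g_n,\nu_n\rangle$ (upper bound by scaling against \eqref{eq:dom2}, lower bound by testing $\psi$ on $N^2-\oplus g^N$ with $g^N_n=g_n\wedge N$), which yields the translation formula $\phi(f-\oplus g)=\phi(f)-\sum_n\langle g_n,\nu_n\rangle$ on all of $B(\nu)$ with no topology involved. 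You instead prove the upper bound by convexity plus \eqref{eq:dom2}, and the lower bound by inner approximation of each $h_n$ by bounded u.s.c.\ minorants and recycling the representation already proved on $U(\nu)$, exploiting that $\langle v_N,\mu\rangle$ is constant over $\mu\in\mathcal{P}(\nu)$. Both routes are valid; yours buys a proof that never leaves the measure-theoretic/topological framework, at the cost of the Vitali--Carath\'eodory-type step, which you should spell out (truncate $h_n$ at level $M$, apply Lusin's theorem to obtain a closed set $F$ with $\nu_n(F^c)$ small, and observe that $(h_n\wedge M)1_F$ is u.s.c.\ because it is non-negative and vanishes off the closed set $F$). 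One point to make explicit in the convexity step: you evaluate $\phi$ at $\oplus h/(1-\alpha)$, so you must first note that $\oplus h=f-u$ is real-valued and dominated by an element of $G(\nu)$, hence lies in $G(\nu)\cap B(\nu)$ --- precisely the observation the paper records before invoking its translation formula.
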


\begin{proof} 
By Proposition \ref{prop:sub}, one has 
$$
\phi(f) = \max_{\mathcal{P}(\nu)} (\ang{f,\mu} - \phi^*_{C_b}(\mu)) \quad \mbox{for all  } f \in C_b.
$$
Furthermore, for given $f \in U(\nu)$, there exists a sequence $(f_j)$ in $C_b$ such that 
$f_j \downarrow f$, and it follows as in the proof of (iv) $\Rightarrow$ (i) in Theorem \ref{thm:usc} 
that there exists a $\mu \in \mathcal{P}(\nu)$ such that $\phi(f) \le \ang{f,\mu} -\phi^*_{C_b}(\mu)$.
Since on the other hand, 
$$
\phi(f) \ge \sup_{\mu \in \mathcal{P}(\nu)} (\ang{f,\mu} -\phi^*_{U(\nu)}(\mu)) 
= \sup_{\mu \in \mathcal{P}(\nu)} (\ang{f,\mu} -\phi^*_{C_b}(\mu)),
$$ one obtains
$$
\phi(f)= \max_{\mu\in \mathcal{P}(\nu)}(\langle f,\mu\rangle-\phi^\ast_{C_b}(\mu)).
$$
Next, notice that it follows, as in the proof of Theorem \ref{thm:Cb}, from the Hahn--Banach 
extension theorem that 
$$
\phi(f) = \max_{\psi \in B'(\nu)}
\left(\psi(f) - \phi^\ast(\psi)\right) \quad \mbox{for all } f \in B(\nu),
$$
where $B'(\nu)$ is the algebraic dual of $B(\nu)$ and 
$\phi^\ast(\psi) := \sup_{f\in B(\nu)}\left(\psi(f)-\phi(f) \right)$,
$\psi \in B'(\nu)$. For $\psi \in B'(\nu)$ with $\phi^*(\psi) < + \infty$, one has for all $\oplus g \in G(\nu) \cap B(\nu)$,
$$
\psi(\oplus g) - \sum_n \ang{g_n, \nu_n} \le 
\psi(\oplus g) - \phi(\oplus g) \le \phi^*(\psi) < + \infty,
$$
and therefore, $\psi(\oplus g) \le \sum_n \ang{g_n, \nu_n}$. On the other hand,
if one sets $g^N_n := g_n \wedge N$ for $n \le N$ and $g^N_n := 0$ for $n > N$, then 
$$
\psi(N^2 - \oplus g^N) \le N^2 - \sum_{n=1}^N \ang{g_n \wedge N, \nu_n},
$$
from which one obtains
$$
\psi(\oplus g) \ge \lim_N \psi(\oplus g^N) \ge \lim_N \sum_{n=1}^N \ang{g_n \wedge N, \nu_n}
= \sum_n \ang{g_n, \nu_n}.
$$
This shows that $\psi(\oplus g) = \sum_n \ang{g_n, \nu_n}$ for all $\oplus g \in G(\nu) \cap B(\nu)$,
and as a result, 
\beas
&& \phi(f - \oplus g) = \max_{\psi \in B'(\nu)} (\psi(f - \oplus g) - \phi^*(\psi))
= \phi(f) - \sum_n \ang{g_n,\nu_n}
\eeas
for all $f \in B(\nu)$ and $\oplus g \in G(\nu)$.
Finally, let $f\in B(\nu)$ be of the form $f= \oplus g + h$ for $\oplus g\in G(\nu)$ and $h \in U(\nu)$.
Then $f - \oplus g \in U(\nu)$ and $\oplus g\in G(\nu) \cap B(\nu)$. So
\beas
&& \phi(f)-\sum_{n}\langle g_n, \nu_n \rangle =\phi(f-\oplus g)
=\max_{\mu \in \mathcal{P}(\nu)}(\langle  f-\oplus g,\mu\rangle-\phi^\ast_{C_b}(\mu))\\
&&=\max_{\mu \in \mathcal{P}(\nu)}(\langle f,\mu\rangle-\phi^\ast_{C_b}(\mu))-\sum_{n}\langle g_n, \nu_n\rangle,
\eeas
and hence, $\phi(f) = \max_{\mu \in \mathcal{P}(\nu)}(\langle  f,\mu\rangle-\phi^\ast_{C_b}(\mu))$.
\end{proof}

\setcounter{equation}{0}
\section{Generalized (martingale) transport dualities}
\label{sec4}

In this section we derive generalizations of Kantorovich's transport duality and the more recently 
introduced martingale transport duality. 

\subsection{Generalized transport dualities}

As in Section \ref{sec3}, let $X_n$ be Polish spaces. We first study the case 
where a probability measure $\nu_n$ is given on each $X_n$. For given 
$f \in B(\nu)$, consider the minimization problem 
\be \label{minproblem}
\phi(f) := \inf\Big\{m+\sum_{n} \ang{g_n,\nu_n} : 
m \in\mathbb{R}, \, \oplus g \in G(\nu) \mbox{ such that } m+\oplus g\geq f \Big\}.
\ee

\begin{remark} \label{rem:diffdual}
Up to a different sign, \eqref{minproblem} can be viewed as a generalized version of the dual of a transport problem.
A standard transport problem in the sense of Kantorovich consists in finding a Borel probability 
measure $\mu$ on the product of two metric spaces $X_1 \times X_2$ with given marginals $\nu_1$ 
and $\nu_2$ that minimizes the expectation $\mathbb{E}^{\mu} c$ of a 
cost function $c : X_1 \times X_2 \to \mathbb{R}$. The (negative of the) dual problem is a minimization 
problem of the form 
\be \label{minfinite}
\inf \sum_{n=1}^2 \ang{g_n,\nu_n},
\ee
where the infimum is taken over all $g_n \in L^1(\nu_n)$ such that $\oplus g \ge f := -c$. To relate
\eqref{minproblem} to \eqref{minfinite} more closely, note that $\oplus g^1 - \oplus g^2$ is well-defined 
for all $\oplus g^1 \in G(\nu)$ and $\oplus g^2 \in G(\nu) \cap B(\nu)$. So instead of \eqref{minproblem},
we could have defined $\phi(f)$ equivalently as
$$\inf \bigg\{ \sum_{n} \ang{g^1_n - g^2_n,\nu_n} : 
\begin{array}{l} 
\oplus g^1 \in G(\nu), \, \oplus g^2 \in G(\nu) \cap B(\nu)\\
\text{such that }\oplus g^1 - \oplus g^2 \geq f
\end{array}\bigg\}.$$
Indeed, it is clear that the above infimum minorizes $\phi(f)$. On  the other hand, since
$$\lim_{N \to + \infty} \sum_{n=1}^N \ang{g^2_n \wedge N, \nu_n} = \sum_n \ang{g^2_n, \nu_n},$$
it cannot be strictly smaller.
\end{remark}

As a consequence of the results in Section \ref{sec3}, one obtains the following version of 
Kantorovich's transport duality with countably many marginal distributions:

\begin{corollary} \label{cor:lintransport}
\be \label{cor1}
\phi(f) = \max_{\mu \in \mathcal{P}(\nu)} \ang{f,\mu} \quad \mbox{for all } f \in B(\nu) \cap (U(\nu) + G(\nu)).
\ee
\end{corollary}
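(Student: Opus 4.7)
The strategy is to apply Proposition \ref{prop:linear} to the functional $\phi$ defined in \eqref{minproblem}, so most of the work reduces to verifying its hypotheses. Note that $\mathcal{P}(\nu)$ is nonempty (it contains the product measure $\bigotimes_n \nu_n$), so we have at least one reference measure for sanity checks.

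First, I would check that $\phi : B(\nu) \to \mathbb{R}$ is well-defined, real-valued, increasing, and convex. Well-definedness and finiteness from above follow because any $f \in B(\nu)$ admits $\oplus g \in G(\nu)$ with $f \le \oplus g$, so the pair $(0, \oplus g)$ is feasible and $\phi(f) \le \sum_n \langle g_n, \nu_n \rangle < \infty$. Finiteness from below is obtained by integrating the constraint $m + \oplus g \ge f$ against any fixed $\mu \in \mathcal{P}(\nu)$, which yields $\phi(f) \ge \langle f, \mu \rangle > -\infty$ (using that $\oplus h \in L^1(\mu)$ for every $\oplus h \in G(\nu)$, as noted just before Proposition \ref{prop:linear}). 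Monotonicity and convexity are immediate from the infimum formulation. Moreover, the domination inequality \eqref{eq:dom2} holds by the very definition of $\phi$ as an infimum over admissible pairs.

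Next, I would verify the crucial conjugate condition $\phi^\ast_{C_b}(\mu) = \phi^\ast_{U(\nu)}(\mu)$ for every $\mu \in \mathcal{P}(\nu)$ by showing both quantities equal zero. For any $\mu \in \mathcal{P}(\nu)$ and any admissible $(m, \oplus g)$ for $f$, integrating against $\mu$ gives $m + \sum_n \langle g_n, \nu_n \rangle \ge \langle f, \mu \rangle$, so taking the infimum yields $\phi(f) \ge \langle f, \mu \rangle$ whenever $\langle f, \mu \rangle$ is well-defined; this applies both when $f \in C_b$ and when $f \in U(\nu)$ (in the latter case $f^+$ is bounded and $f^-$ lies in $L^1(\mu)$ because it is dominated by an element of $G(\nu)$). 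Hence $\phi^\ast_{C_b}(\mu), \phi^\ast_{U(\nu)}(\mu) \le 0$; the reverse inequality follows by testing with $f \equiv 0$ and using $\phi(0) \le 0$.

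With these hypotheses confirmed, Proposition \ref{prop:linear} applies and gives
\[
\phi(f) = \max_{\mu \in \mathcal{P}(\nu)} \bigl(\langle f, \mu \rangle - \phi^\ast_{C_b}(\mu)\bigr) = \max_{\mu \in \mathcal{P}(\nu)} \langle f, \mu \rangle
\]
for all $f \in B(\nu) \cap (U(\nu) + G(\nu))$, which is the desired identity. The only delicate point is the verification that $\phi$ takes real values and that the integration argument for $\phi^\ast(\mu) \le 0$ is valid on all of $U(\nu)$; both rely essentially on the integrability of $G(\nu)$-functions against measures in $\mathcal{P}(\nu)$, which was already established in the paragraph preceding Proposition \ref{prop:linear}. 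Everything else is a direct consequence of the framework built in Sections \ref{sec2} and \ref{sec3}.
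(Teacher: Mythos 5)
Your proposal is correct and follows essentially the same route as the paper: verify that $\phi$ is a real-valued increasing sublinear functional satisfying \eqref{eq:dom2}, show $\phi(f)\ge\langle f,\mu\rangle$ for all $\mu\in\mathcal{P}(\nu)$ so that $\phi(0)=0$ and $\phi^\ast_{C_b}(\mu)=\phi^\ast_{U(\nu)}(\mu)=0$, and then invoke Proposition \ref{prop:linear}. No gaps.
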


\begin{proof}
Clearly, $\phi(f) < + \infty$ for all $f \in B(\nu)$. On the other hand, since $\mathcal{P}(\nu)$ is non-empty
(it contains the product measure $\otimes_n \nu_n$), one has
$$
m+ \sum_n \ang{g_n,\nu_n} \ge \sup_{\mu \in \mathcal{P}(\nu)} \ang{f,\mu} > - \infty
$$
for all $m \in \mathbb{R}$, $\oplus g \in G(\nu)$ and $f \in B(\nu)$ such that $m + \oplus g \ge f$.
It follows that $\phi : B(\nu) \to \mathbb{R}$ is an increasing sublinear functional satisfying
$$
\phi(f) \ge \sup_{\mu \in \mathcal{P}(\nu)} \ang{f,\mu} \quad \mbox{for all } f \in B(\nu).
$$
In particular, $\phi(0) = 0$, and
$\phi^*_{C_b}(\mu) = \phi^*_{U(\nu)}(\mu) = 0$ for all $\mu \in \mathcal{P}(\nu)$.
So the duality \eqref{cor1} follows from Proposition \ref{prop:linear}.
\end{proof}

\begin{remark} \label{rem:trans}
If $X$ is a finite product of Polish spaces, it can be shown that 
$$
\phi(f) = \sup_{\mu \in \mathcal{P}(\nu)} \ang{f,\mu} \quad \mbox{for all } f \in B_b; 
$$
see e.g. \cite{kellerer, bei-sch,bei-leo-sch}. But for countably infinite products, there may arise 
a duality gap; that is, it may happen that 
$$
\phi(f) > \sup_{\mu \in \mathcal{P}(\nu)} \ang{f,\mu} \quad \mbox{for some } f \in B_b.
$$
For instance, if $X$ is the product of $X_n = \{0,1\}$, $n \in \mathbb{N}$, and
$\nu_n=\frac{1}{2}(\delta_0+\delta_1)$ for all $n$, then $f := \liminf_n \pi_n$
belongs to $B_b$, and it follows from Fatou's lemma that 
$$
\ang{f,\mu} \le \liminf_n \ang{\pi_n,\mu} =\frac{1}{2} \quad \mbox{for all }
\mu \in \mathcal{P}(\nu).
$$
On the other hand, assume $f \le m + \oplus g$ for some $m \in \mathbb{R}$ and $\oplus g\in G(\nu)$.
Since $$\frac{1}{2} \sum_{n}(g_n(0)+g_n(1))=\sum_{n}\langle g_n,\nu_n\rangle<+\infty,$$ one has 
$\sum_{n}g_n(x_n)<+\infty$, for all $x \in X$, and therefore,
$$
\inf_{k \in\mathbb{N}} \min_{(y_1,\dots,y_k)\in\{0,1\}^k}
\Big(\sum_{n \le k} g_n(y_n)+ \sum_{n > k} g_n(x_n)\Big)
= \sum_n \min_{y_n\in\{0,1\}}g_n(y_n) \le \sum_n \ang{g_n,\nu_n}.
$$
Consequently,
$$
1= \inf_{k \in \mathbb{N}} \min_{(y_1,\dots,y_k)\in\{0,1\}^k} f(y_1,\dots,y_k,1,1,\dots) \le
m+\sum_n \ang{g_n,\nu_n},
$$
from which it follows that $\phi(f)\ge 1$.
\end{remark}

In the more general case, where the $\phi_n : B_{b,n} \to \mathbb{R}$ are
sublinear functionals given by $$\phi_n(g) = \sup_{\nu_n \in \mathcal{P}_n} \ang{g,\nu_n}$$ for 
non-empty convex $\sigma(ca_n^+, C_{b,n})$-compact sets of Borel probability measures $\mathcal{P}_n$ on $X_n$,
we obtain a generalized Kantorovich duality with countably many sets of marginal distributions.
As in Section \ref{sec3}, $\mathcal{P}$ denotes the set of probability distributions such that 
$\mu_n \in \mathcal{P}_n$ for all $n$. Compared to Corollary \ref{cor:lintransport}, one has to 
modify the definition of $\phi$ slightly:
\be \label{min}
\phi(f) := \inf\Big\{m+\sum_{n} \phi_n(g_n) : 
m \in\mathbb{R}, \, g_n \in B^+_{b,n} \mbox{ such that } m+\oplus g\geq f \Big\}.
\ee
Then, an application of Proposition \ref{prop:sub} and essentially the same arguments as in the 
proof of Corollary \ref{cor:lintransport} yield the following duality:

\begin{corollary} \label{cor:subtransport}
\be \label{cor2}
\phi(f) = \max_{\mu \in \mathcal{P}} \ang{f,\mu} \quad \mbox{for all } f \in U_b.
\ee
\end{corollary}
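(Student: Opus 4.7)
The plan is to mimic the proof of Corollary \ref{cor:lintransport}, replacing the single-marginal hypothesis by the sublinear one and invoking Proposition \ref{prop:sub} in place of Proposition \ref{prop:linear}.

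First I would extend the definition \eqref{min} from $U_b$ to all of $B_b$ by exactly the same formula and verify that $\phi : B_b \to \mathbb{R}$ is a real-valued increasing sublinear functional with $\phi(0) = 0$. Since any $f \in B_b$ is dominated by the constant $\|f\|_\infty$, the upper bound $\phi(f) \le \|f\|_\infty$ is immediate. For the lower bound, pick any selection $\nu_n \in \mathcal{P}_n$; Kolmogorov's extension theorem produces a product measure $\otimes_n \nu_n$ on $X$ with marginals $\nu_n$, so $\mathcal{P}$ is non-empty, and for any $(m,(g_n))$ admissible in the defining infimum and any $\mu \in \mathcal{P}$,
$$
\ang{f,\mu} \le m + \sum_n \ang{g_n,\mu_n} \le m + \sum_n \phi_n(g_n),
$$
which yields $\phi(f) \ge \sup_{\mu\in\mathcal{P}} \ang{f,\mu} \ge -\|f\|_\infty$. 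Monotonicity, positive homogeneity, and subadditivity are inherited from the corresponding properties of the $\phi_n$.

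Second, the domination condition \eqref{eq:dom} required by Proposition \ref{prop:sub} is built into the definition of $\phi$, so Proposition \ref{prop:sub} already delivers the representation \eqref{rep1} on $C_b$ with the conjugate taken over $\mathcal{P}$.

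Finally, to promote the duality from $C_b$ to $U_b$ I would check that $\phi^*_{C_b}(\mu) = \phi^*_{U_b}(\mu)$ for every $\mu \in \mathcal{P}$. The inequality $\phi(f) \ge \ang{f,\mu}$ established above gives $\ang{f,\mu} - \phi(f) \le 0$ for all $f \in B_b$, while equality holds at $f = 0$ since $\phi(0)=0$. Both conjugates therefore vanish on $\mathcal{P}$, and the second statement of Proposition \ref{prop:sub} delivers \eqref{cor2}. The bulk of the analytic work is packaged inside Proposition \ref{prop:sub} (ultimately Theorems \ref{thm:Cb} and \ref{thm:usc}); the main care needed here is simply the bookkeeping that $\phi$ fits the framework and that the conjugate equality on $\mathcal{P}$ is automatic from sublinearity, so no genuine obstacle is expected.
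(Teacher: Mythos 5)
Your argument is essentially identical to the paper's own proof: you verify that $\phi$ defined by \eqref{min} is a real-valued increasing sublinear functional on $B_b$ satisfying $\phi(0)=0$ and the domination condition \eqref{eq:dom}, deduce $\phi(f)\ge\sup_{\mu\in\mathcal{P}}\ang{f,\mu}$ (hence $\phi^*_{C_b}(\mu)=\phi^*_{U_b}(\mu)=0$ on $\mathcal{P}$), and then apply Proposition \ref{prop:sub}. The paper compresses all of this into a few lines by referring back to the proof of Corollary \ref{cor:lintransport}, but the content is the same and your bookkeeping is correct.
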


\begin{proof}
As in the proof of Corollary \ref{cor:lintransport} it is
easy to see that $\phi : B_b \to \mathbb{R}$ is an increasing sublinear functional such that
$$
\phi(f) \ge \sup_{\mu \in \mathcal{P}} \ang{f,\mu} \quad \mbox{for all } f \in B_b.
$$
Since $\mathcal{P}$ is non-empty (it contains all product measures $\otimes_n \nu_n$ for $\nu_n \in \mathcal{P}_n$),
it follows that $\phi(0) = 0$ and $\phi^*_{C_b}(\mu) = \phi^*_{U_b}(\mu) = 0$ for all $\mu \in \mathcal{P}$.
So \eqref{cor2} follows from Proposition \ref{prop:sub}.
\end{proof}

\subsection{Generalized martingale transport dualities}
\label{sec:martingal}

Next, we derive linear and sublinear versions of the martingale transport duality with 
countably many marginal  constraints. Let $X_n$ be non-empty closed subsets of 
$\mathbb{R}^d$ and model the discounted prices of $d$ financial assets by 
$S_0 := s_0 \in \mathbb{R}^d$ and $S_n(x) := x_n$, $x \in X = \prod_n X_n$. 
The corresponding filtration is given by $\mathcal{F}_n:=\sigma(S_j : j \leq n)$.

We first assume that each space $X_n$ carries a single Borel probability measure $\nu_n$.
Moreover, we suppose that money can be lent and borrowed at the same interest rate and 
European options with general discounted payoffs $g_n \in B^+_n$ can be bought at 
price $\ang{g_n,\nu_n}$ (we suppose they either exist as structured products or they 
can be synthesized by investing in more standard options; see e.g. \cite{BL} for the form of $\nu_n$
if European call options exist with maturity $n$ and all strikes). A function $\oplus g \in G(\nu)$ 
then corresponds to a static option portfolio costing $\sum_n \ang{g_n,\nu_n}$. In addition, 
the underlying can be traded dynamically. The set $\mathcal{H}$ of dynamic trading strategies 
consists of all finite sequences $h_1, \dots, h_N$ such that each $h_n$ is an $\mathbb{R}^d$-valued
$\mathcal{F}_{n-1}$-measurable function on $X$. An $h \in \mathcal{H}$ generates gains of the form 
$$
(h\cdot S)_N :=\sum_{n=1}^N h_n \cdot (S_n - S_{n-1}).
$$
A triple $(m, \oplus g , h) \in \Theta := \mathbb{R} \times G(\nu) \times \mathcal{H}$ describes a 
semi-static trading strategy with cost $m + \sum_{n} \ang{ g_n,\nu_n}$ and outcome 
$m + \oplus g + (h \cdot S)_N$.

A strategy $(m, \oplus g , h) \in \Theta$ is said to be a model-independent arbitrage if 
$$
m+\sum_n \ang{g_n,\nu_n} \leq 0 \quad \mbox{and} \quad m + \oplus g+ (h \cdot S)_N >0.
$$
Similarly, we call a strategy $(m, \oplus g , h) \in \Theta$ a uniform arbitrage if
$$
m+\sum_n \ang{g_n,\nu_n} < 0 \quad \mbox{and} \quad m + \oplus g+ (h \cdot S)_N \ge 0.
$$
Consider the superhedging functional
\be \label{superhedging}
\phi(f) := \inf \bigg\{m + \sum_{n} \ang{ g_n,\nu_n}:
\begin{array}{l} 
(m,\oplus g, h) \in \Theta \mbox{ such that }\\
m + \oplus g+ (h \cdot S)_N \geq f
\end{array}\bigg\}, \ee
and denote by ${\mathcal M}(\nu)$ the set of probability measures $\mu \in \mathcal{P}(\nu)$ under which
$S$ is a $d$-dimensional martingale.

\begin{remark}
The static part of a semi-static strategy in $\Theta$ consists of a cash position and 
a portfolio of options with non-negative payoffs. But one could extend the set of strategies 
to include portfolios with outcomes $\oplus g^1 - \oplus g^2 + (h \cdot S)_N$
and prices $\sum_n \ang{g^1_n - g^2_n,\nu_n}$ for $g^1 \in G(\nu)$, $g^2 \in G(\nu) \cap B(\nu)$ and $h \in \mathcal{H}$. 
It follows as in Remark \ref{rem:diffdual} that this would not change the superhedging functional \eqref{superhedging},
the definition of a model-independent arbitrage or the definition of a uniform arbitrage.
\end{remark}

The following corollary extends the superhedging duality of \cite{bei-hl-pen} to a model with
countably many time periods and contains a model-independent fundamental theorem of asset pricing 
as a consequence. For $x \in X_n \subseteq \mathbb{R}^d$, denote by $|x|$ the Euclidean norm of $x$.

\begin{corollary} \label{cor:marttransport}
Assume that $\int_{X_n} \lvert x\rvert \,d\nu_n(x)<+\infty$ for all $n$.
Then the following are equivalent:
\begin{itemize}
\item[{\rm (i)}] 
there is no model-independent arbitrage,
\item[{\rm (ii)}] 
there is no uniform arbitrage,
\item[{\rm (iii)}]
$\mathcal{M}(\nu)\neq\emptyset$.
\end{itemize}
Moreover, if {\rm (i)--(iii)} hold, then 
\be
\label{rep3}
\phi(f)=\max_{\mu\in {\mathcal M}(\nu)} \ang{f,\mu} \quad \mbox{for all } f\in B(\nu) \cap (U(\nu) + G(\nu)).
\ee
\end{corollary}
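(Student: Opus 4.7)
The plan is to establish the equivalences via the cycle $(\mathrm{iii})\Rightarrow(\mathrm{i})\Rightarrow(\mathrm{ii})\Rightarrow(\mathrm{iii})$ and to obtain the duality \eqref{rep3} as a by-product of the last implication through Proposition~\ref{prop:linear}. The two easy directions go as follows. For $(\mathrm{iii})\Rightarrow(\mathrm{i})$, fix $\mu\in\mathcal{M}(\nu)$ and a putative model-independent arbitrage $(m,\oplus g,h)$: since $m+\oplus g+(h\cdot S)_N\ge 0$ and $\oplus g\in L^1(\mu)$, the martingale transform $(h\cdot S)_N$ is a local martingale bounded below by an integrable random variable, hence a supermartingale with $\mathbb{E}^\mu[(h\cdot S)_N]\le 0$; integrating the strict positivity then contradicts $m+\sum_n\ang{g_n,\nu_n}\le 0$. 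For $(\mathrm{i})\Rightarrow(\mathrm{ii})$, a uniform arbitrage with cost $-\varepsilon<0$ becomes model-independent after adding $\varepsilon/2$ to the cash position.

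Assuming $(\mathrm{ii})$, I would first verify that $\phi:B(\nu)\to\mathbb{R}$ is a well-defined, increasing, sublinear functional satisfying the domination \eqref{eq:dom2} (via $h=0$). Finiteness from above is immediate; finiteness from below uses $(\mathrm{ii})$, since $\phi(-\oplus g^0)=-\infty$ for some $\oplus g^0\in G(\nu)$ would, after augmenting the static side of a witnessing strategy by $\oplus g^0$, yield a uniform arbitrage. In particular $\phi(0)=0$.

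The main obstacle is checking the hypothesis $\phi^*_{C_b}(\mu)=\phi^*_{U(\nu)}(\mu)$ of Proposition~\ref{prop:linear}. Since $\phi$ is sublinear with $\phi(0)=0$, both conjugates take only the values $0$ or $+\infty$, so it suffices to show that on $\mathcal{P}(\nu)$ both their zero sets coincide with $\mathcal{M}(\nu)$. The inclusion $\mathcal{M}(\nu)\subseteq\{\phi^*_{U(\nu)}=0\}$ is the same supermartingale integration as above, applied to an arbitrary $f\in U(\nu)$ and its superhedges: the bound $(h\cdot S)_N\ge -f^- -m-\oplus g$ with $f^-\in B(\nu)\subseteq L^1(\mu)$ supplies the needed integrability. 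For the converse, take $\mu\in\mathcal{P}(\nu)\setminus\mathcal{M}(\nu)$ and pick a bounded continuous $\mathcal{F}_{n-1}$-measurable $\chi\ge 0$ and $v\in\mathbb{R}^d$ with $c:=\mathbb{E}^\mu[\chi\, v\cdot(S_n-S_{n-1})]>0$. Let $\psi_K:\mathbb{R}\to\mathbb{R}$ be a bounded continuous function equal to the identity on $[-K,K]$ and to $\pm K$ outside, and set $F_K:=\chi\,\psi_K(v\cdot(S_n-S_{n-1}))\in C_b$. Dominated convergence gives $\ang{F_K,\mu}\to c$. The functions $F_K$ are superhedged by the semi-static strategy with $h_n=\chi v$ (all other $h_k=0$) and static hedge $g_n(y)=g_{n-1}(y)=|v|\|\chi\|_\infty |y|\mathbf{1}_{\{|y|>K/(2|v|)\}}$: the threshold $K/(2|v|)$ is chosen so that $|v\cdot(S_n-S_{n-1})|>K$ pushes $|S_n|$ or $|S_{n-1}|$ past the trigger, which closes the case analysis (the offending cross terms $\chi v\cdot S_n$ and $\chi v\cdot S_{n-1}$ are absorbed by the non-negative contributions $(\|\chi\|_\infty-\chi)|v||S_k|$ wherever the indicators fire, and by the tail bound $|S_k|\le K/(2|v|)$ wherever they do not). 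The hedging cost, equal to $|v|\|\chi\|_\infty$ times the sum of the $\nu_n$- and $\nu_{n-1}$-integrals of $|y|\mathbf{1}_{\{|y|>K/(2|v|)\}}$, tends to $0$ as $K\to\infty$ by the moment assumption $\int|y|\,d\nu_k<\infty$ and dominated convergence. Hence $\phi^*_{C_b}(\mu)\ge\lim_K(\ang{F_K,\mu}-\phi(F_K))\ge c>0$, and therefore $\phi^*_{C_b}(\mu)=+\infty$.

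Proposition~\ref{prop:linear} then applies and gives
\[\phi(f)=\max_{\mu\in\mathcal{P}(\nu)}\bigl(\ang{f,\mu}-\phi^*_{C_b}(\mu)\bigr)=\max_{\mu\in\mathcal{M}(\nu)}\ang{f,\mu}\]
for all $f\in B(\nu)\cap(U(\nu)+G(\nu))$, which is \eqref{rep3}. Specializing to $f\equiv 0$ yields $0=\phi(0)=\max_{\mu\in\mathcal{M}(\nu)}0$, and under the convention $\max\emptyset=-\infty$ this forces $\mathcal{M}(\nu)\ne\emptyset$, establishing $(\mathrm{iii})$ and completing the cycle.
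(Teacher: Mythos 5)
Your proposal is correct and follows essentially the same route as the paper: the easy implications are identical, and the key step — showing $\phi^*_{C_b}(\mu)=+\infty$ for $\mu\in\mathcal{P}(\nu)\setminus\mathcal{M}(\nu)$ by truncating a martingale test function to a bounded continuous one and absorbing the truncation error into a static hedge whose cost vanishes by the first-moment assumption — is exactly the paper's argument (the paper uses $f\wedge k\vee(-k)$ and the bound $f^k_k\le f^k+w^k$ where your $\psi_K$ and $g_{n-1},g_n$ play the roles of that truncation and of $w^k$), after which Proposition~\ref{prop:linear} yields \eqref{rep3} and hence (iii). Your case analysis for the superhedge of $F_K$ checks out (with the trivial adjustment that for $n=1$ the term involving $S_0=s_0$ is a constant absorbed into $m$), and your supermartingale argument for $\mathbb{E}^\mu[(h\cdot S)_N]\le 0$ is an acceptable substitute for the paper's appeal to the Jacod--Shiryaev true-martingale result.
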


\begin{proof}
It is clear that (i) implies (ii) since for every uniform arbitrage $(m, \oplus g , h)$, there exists 
an $\varepsilon >0$ such that $(m + \varepsilon, \oplus g , h)$ is a model-independent arbitrage.

Furthermore, if (iii) holds, there exists a $\mu$ in ${\mathcal M}(\nu)$. Let
$(m, \oplus g, h) \in \Theta$ be a strategy such that $m+ \oplus g + (h \cdot S)_N > 0$. Then
$\mathbb{E}^{\mu} (h\cdot S)_N^- \le m^+ + \sum_n \ang{g_n,\nu_n} < + \infty$, and it follows 
that $(h \cdot S)_n$, $n = 1, \dots, N$, is a martingale under $\mu$ (see e.g. \cite{JS}). In particular, 
$\mathbb{E}^{\mu} (h\cdot S)_N = 0$, and therefore,
$$
m + \sum_n \ang{g_n,\nu_n} = \ang{m+\oplus g + (h\cdot S)_N,\mu} > 0.
$$
So there is no model-independent arbitrage, showing that (i) is satisfied.

Now, let us assume (ii). Then 
$\phi : B(\nu)\to\mathbb{R}\cup\{-\infty\}$ is an increasing sublinear functional with the property
that $\phi(f) \leq m+\sum_{n \ge 1} \ang{g_n,\nu_n}$
whenever $f \le m + \oplus g$ for some $m \in \mathbb{R}$ and $\oplus g\in G(\nu)$.
If there is no uniform arbitrage, one has $\phi(0) = 0$, from which it follows by 
subadditivity that $\phi(f) > - \infty$ for all $f \in B(\nu)$. Moreover, if
$$
m + \oplus g + (h \cdot S)_N \ge f
$$
for $(m, \oplus g, h) \in \Theta$ and $f \in B(\nu)$, one has for all $\mu \in \mathcal{M}(\nu)$,
$$
\mathbb{E}^{\mu} (h \cdot S)^-_N \le m^+ + \sum_n \ang{g_n, \nu_n} + \ang{f^-, \mu} < + \infty.
$$
It follows as above that $\mathbb{E}^{\mu} (h \cdot S)_N=0$, and therefore, $m + \sum_n \ang{g_n,\nu_n} \ge \ang{f,\mu}$.
This implies $\phi(f) \ge \ang{f,\mu}$, and consequently,
$\phi^*_{C_b}(\mu) =\phi^*_{U(\nu)}(\mu) = 0$ for all $\mu \in \mathcal{M}(\nu)$. So if we can show that 
\begin{equation}\label{eq10}
\phi^\ast_{C_b}(\mu) = +\infty\quad\mbox{for all }\mu\in \mathcal{P}(\nu) \setminus \mathcal{M}(\nu),
\end{equation}
we obtain from Proposition \ref{prop:linear} that \eqref{rep3} holds, which in turn, implies that
$\mathcal{M}(\nu)$ cannot be empty. 

To show \eqref{eq10}, let $\mu \in \mathcal{P}(\nu)$. If $\mathbb{E}^{\mu} S_1 = s_0$ and 
$\mathbb{E}^{\mu} \edg{v(x_1, \dots , x_n)\cdot(x_{n+1}-x_n)} = 0$ for all $n \ge 1$ and 
every bounded continuous function $v : \prod_{j=1}^n X_j \to \mathbb{R}^d$, then 
$S$ is a martingale under $\mu$, and therefore, $\mu \in \mathcal{M}(\nu)$. So for 
$\mu \in \mathcal{P}(\nu) \setminus \mathcal{M}(\nu)$, there must exist a continuous function $f \in B(\nu)$ 
with $\ang{f,\mu} > 0$ such that $f$ is either of the form 
$f(x) = v \cdot (x_1-s_0)$ for a vector $v \in \mathbb{R}^d$ or
$f(x) = v(x_1, \dots, x_n) \cdot (x_{n+1}-x_n)$ for some $n \ge 1$ and a bounded continuous function 
$v: \prod_{j=1}^n X_j \to \mathbb{R}^d$. For $k \in \mathbb{N}$, $f^k := f \wedge k$ is bounded above
and $f^k_k := f^k \vee (-k)$ bounded. By monotonicity, one has $\phi(f^k) \le \phi(f) \le 0$. Moreover,
$$
f^k_k(x) = f^k(x) + (k+f(x))^- \le f^k(x) + w^k(x),
$$
where 
$$
w^k(x) := (c|x_n| - k/2)^+ + (c|x_{n+1}| - k/2)^+
$$
and $c \in \mathbb{R}_+$ is a bound on $|v|$. Since $w^k$ is in $G(\nu)$, 
one gets
$$
\phi(w^k) \le \int_{X_n} (c|x_n| - k/2)^+ d\nu_n(x_n) 
+ \int_{X_{n+1}} (c|x_{n+1}| - k/2)^+ d\nu_{n+1}(x_{n+1})
\to 0 $$
for $k \to + \infty$.
So for $k$ large enough, one obtains from monotonicity and subadditivity that
$$
\ang{f^k_k,\mu} - \phi(f^k_k) \ge \ang{f^k,\mu} - \phi(f^k) - \phi(w^k) \ge 
\ang{f^k,\mu} - \phi(w^k) > 0,
$$
and as a result, $\phi^*_{C_b}(\mu) = + \infty$. 
\end{proof}

Now, we extend the setting of Corollary \ref{cor:marttransport} by adding friction and incompleteness.
To simplify the presentation we assume that each $X_n$ is a non-empty closed subset of $\mathbb{R}^d_+$.
As above, $S_0 = s_0 \in \mathbb{R}^+_d$, $S_n(x) = x_n$, $x \in X$, and  
the set of dynamic trading strategies $\mathcal{H}$ is given by all finite sequences 
$h_1, \dots, h_N$ of $\mathcal{F}_{n-1}$-measurable mappings $h_n : X \to \mathbb{R}^d$.
But now we assume that dynamic trading incurs proportional transaction costs. 
If the bid and ask prices of asset $i$ are given by $(1- \varepsilon_i) S^i_n$ and $(1+\varepsilon_i) S^i_n$
for a constant $\varepsilon_i \ge 0$, a strategy $h \in \mathcal{H}$ leads to an outcome of 
$$
h(S) := \sum_{n=1}^N \sum_{i=1}^d h^i_n (S^i_n - S^i_{n-1}) - 
\varepsilon_i |h^i_n - h^i_{n-1}| S^i_{n-1}, \quad \mbox{where } h^i_0 := 0.
$$
(We assume there are no initial asset holdings. So there is a transaction cost at time $0$.
On the other hand, asset holdings at time $N$ are valued at $h_N \cdot S_N$ and do not have to 
be converted into cash.) Similarly, a European option with payoff $g_n \in B^+_n$ at time $n$ is assumed to cost
$$
\phi_n(g_n) = \sup_{\nu_n \in \mathcal{P}_n} \ang{g_n,\nu_n},
$$
where $\mathcal{P}_n$ is a non-empty convex $\sigma(ca^+_n,C_{b,n})$-compact set of Borel probability 
measures on $X_n$ (non-linear prices $\phi_n(g_n)$ may arise if e.g. not enough liquidly traded vanilla 
options exist to exactly replicate the payoffs $g_n$, or there are positive bid-ask spreads in the vanilla options market;
see e.g. \cite{DS1}). Compared to the frictionless case, we now have to require a little bit more integrability of the option 
portfolio. As in Section \ref{sec3}, we denote by $\mathcal{P}$ the set of all Borel probability measures 
$\mu$ on $X = \prod_n X_n$ with marginal distributions in $\mathcal{P}_n$. We introduce the sets
\beas
G(\mathcal{P}) &:= &\Big\{\oplus g : (g_n) \in \prod_{n} B^+_n \mbox{ such that } 
\sum_{n} \phi_n(g_n) < +\infty \Big\},\\
B(\mathcal{P}) &:=& \{f \in B: |f| \leq \oplus g \mbox{ for some } \oplus g \in G(\mathcal{P})\},
\eeas
and consider option portfolios with payoffs $\oplus g$ for functions $g_n \in B^+_n$ such that
$\sum_n \phi_n(g_n) < + \infty$. We still denote the set of corresponding strategies $(m,\oplus g, h)$ by $\Theta$. 
The corresponding superhedging functional is given by 
\be \label{superhedgingbounded}
\phi(f):=\bigg\{m + \sum_{n} \phi_n(g_n): 
\begin{array}{l} 
(m,\oplus g, h) \in \Theta \mbox{ such that}\\
m + \oplus g+ h(S) \geq f
\end{array}\bigg\}.
\ee

A model-independent arbitrage now consists of a strategy $(m,\oplus g,h) \in \Theta$ such that 
$$
m + \sum_n \phi_n(g_n) \le 0 \quad \mbox{and} \quad m + \oplus g + h(S) > 0,
$$
and a uniform arbitrage of a strategy $(m,\oplus g,h) \in \Theta$ satisfying 
$$
m + \sum_n \phi_n(g_n) < 0 \quad \mbox{and} \quad m + \oplus g + h(S) \ge 0.
$$
The set of martingale measures has to be extended to the set 
$\mathcal{M}(\mathcal{P})$ of all measures $\mu \in \mathcal{P}$ satisfying
\be \label{cps}
(1- \varepsilon_i)S^i_n \le \mathbb{E}^{\mu}[S^i_N \mid \mathcal{F}_n] \le (1+\varepsilon_i) S^i_n
\quad \mbox{for all } i,N \mbox{ and } n\leq N.
\ee

The following is a variant of Corollary \ref{cor:marttransport} with friction and incompleteness. 
It extends the duality result of \cite{DS1} to the case of countably many time periods and European 
options with all maturities.

\begin{corollary}
\label{cor:gen:transport}
Assume that $\lim_{k\to+\infty}\sup_{\nu_n\in\mathcal{P}_n}\int_{X_n} (\lvert x\rvert-k)^+ \,d\nu_n(x)=0$ for all $n$.
Then the following are equivalent:
\begin{itemize}
\item[{\rm (i)}] there is no model-independent arbitrage,
\item[{\rm (ii)}] there is no uniform arbitrage,
\item[{\rm (iii)}]
$\mathcal{M}(\mathcal{P}) \neq \emptyset$.
\end{itemize}
Moreover, if {\rm (i)--(iii)} hold, then 
\be
\label{rep4}
\phi(f)=\max_{\mu\in\mathcal{M}(\mathcal{P})} \ang{f,\mu} \quad \mbox{for all } f\in U_b.
\ee
\end{corollary}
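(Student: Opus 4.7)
The proof follows the template of Corollary~\ref{cor:marttransport}, with the linear pricing rule $\langle \cdot, \nu_n\rangle$ replaced by the sublinear $\phi_n$ and martingale measures replaced by the consistent-price-system set $\mathcal{M}(\mathcal{P})$. The implication (i) $\Rightarrow$ (ii) is immediate: a uniform arbitrage $(m, \oplus g, h)$ becomes a model-independent arbitrage after replacing $m$ by $m + \varepsilon$ for sufficiently small $\varepsilon > 0$.

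For (iii) $\Rightarrow$ (i), I fix $\mu \in \mathcal{M}(\mathcal{P})$ together with a strategy $(m, \oplus g, h) \in \Theta$ satisfying $m + \oplus g + h(S) > 0$. Since $h(S)^- \leq m^+ + \oplus g$ is $\mu$-integrable (because $\sum_n \langle g_n, \mu_n\rangle \leq \sum_n \phi_n(g_n) < +\infty$), the process $\tilde{S}^i_n := \mathbb{E}^{\mu}[S^i_N \mid \mathcal{F}_n]$ is a well-defined $\mu$-martingale satisfying $|\tilde{S}^i_n - S^i_n| \leq \varepsilon_i S^i_n$. Applying Abel summation to $\sum_n h^i_n (S^i_n - S^i_{n-1})$ after inserting $\pm \tilde{S}^i_n$ kills the martingale differences and leaves a remainder whose expectation is bounded by $\varepsilon_i \mathbb{E}^{\mu}\bigl[\sum_n |h^i_n - h^i_{n-1}| S^i_{n-1}\bigr]$. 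Summing over $i$ then yields $\mathbb{E}^{\mu}[h(S)] \leq 0$, hence $0 < \mathbb{E}^{\mu}[m + \oplus g + h(S)] \leq m + \sum_n \langle g_n, \mu_n\rangle \leq m + \sum_n \phi_n(g_n)$.

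For (ii) $\Rightarrow$ (iii) together with the duality \eqref{rep4}, the plan is to apply Proposition~\ref{prop:sub}. Under (ii) one has $\phi(0) = 0$, whence $\phi : B_b \to \mathbb{R}$ is a real-valued, increasing, sublinear functional satisfying $\phi(f) \leq m + \sum_n \phi_n(g_n)$ whenever $f \leq m + \oplus g$ (take $h = 0$). For $\mu \in \mathcal{M}(\mathcal{P})$, the Abel-summation bound from the previous paragraph gives $\phi(f) \geq \langle f, \mu\rangle$ for all $f \in B_b$, so $\phi^*_{C_b}(\mu) = \phi^*_{U_b}(\mu) = 0$. What remains is to show $\phi^*_{C_b}(\mu) = +\infty$ for every $\mu \in \mathcal{P} \setminus \mathcal{M}(\mathcal{P})$. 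Once this is in place, Proposition~\ref{prop:sub} delivers \eqref{rep4} on $U_b$, and evaluating at $f = 0$ forces $\mathcal{M}(\mathcal{P}) \neq \emptyset$, establishing (iii).

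The main obstacle is this last identification, because the natural test functions witnessing failure of the consistent-price-system inequalities are unbounded. If $\mu \in \mathcal{P} \setminus \mathcal{M}(\mathcal{P})$, a standard density approximation produces indices $i$ and $n \leq N$, together with a bounded continuous nonnegative $v : \prod_{j \leq n} X_j \to \mathbb{R}$, such that, after possibly swapping sign, $\langle f, \mu\rangle > 0$ for $f(x) := v(x_1, \ldots, x_n)(x^i_N - (1+\varepsilon_i) x^i_n)$. The semi-static strategy that holds $v$ shares of asset $i$ between times $n$ and $N$ super-replicates $f$ at zero static cost, so $\phi(f) \leq 0$, but $f$ itself is unbounded. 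I would therefore truncate to $f^k_k := (f \wedge k) \vee (-k) \in C_b$ exactly as in Corollary~\ref{cor:marttransport}, dominate the truncation error by $w^k := (c|x_n| - k/2)^+ + (c|x_N| - k/2)^+$ with $c := (1+\varepsilon_i)\|v\|_\infty$, and exploit the hypothesis $\lim_{k \to +\infty}\sup_{\nu_n \in \mathcal{P}_n}\int (|x| - k)^+\, d\nu_n = 0$ applied to both the $n$th and $N$th marginals to conclude $\phi(w^k) \to 0$. This yields $\langle f^k_k, \mu\rangle - \phi(f^k_k) \to \langle f, \mu\rangle > 0$, and rescaling by $\alpha \to +\infty$ gives $\phi^*_{C_b}(\mu) = +\infty$, as required.
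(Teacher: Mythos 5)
Your proposal follows essentially the same route as the paper's proof: (i) $\Rightarrow$ (ii) by perturbing $m$, (iii) $\Rightarrow$ (i) via Abel summation against $\tilde S^i_n=\mathbb{E}^\mu[S^i_N\mid\mathcal F_n]$ so that the bid--ask condition absorbs the transaction-cost terms, and (ii) $\Rightarrow$ (iii) together with \eqref{rep4} by checking the hypotheses of Proposition \ref{prop:sub}, with $\phi^*_{C_b}=+\infty$ off $\mathcal{M}(\mathcal{P})$ obtained from the same truncated test functions $f^k_k$ dominated by $w^k$. The one step you gloss over is why the Abel-summed ``martingale part'' really has expectation zero when $h$ is unbounded: the paper makes this rigorous by writing $h(S)=M_N-A_N$ with $M$ a generalized martingale and $A$ predictable increasing, and invoking the Jacod--Shiryaev criterion (integrability of $M_N^-$, which follows from that of $h(S)^-$, forces $M$ to be a true martingale) -- a detail you should supply, since the individual summands need not be integrable.
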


\begin{proof}
As in the proof of Corollary \ref{cor:marttransport}, the implication 
(i) $\Rightarrow$ (ii) is straight-forward since the existence of a uniform arbitrage 
implies the existence of a model-independent arbitrage. 

If (iii) holds, there exists a $\mu$ in $\mathcal{M}(\mathcal{P})$. So if $(m, \oplus g, h) \in \Theta$ 
is a strategy with $m + \oplus g + h(S) > 0$, then
$$
\mathbb{E}^{\mu} h(S)^- \le m^+ + \ang{\oplus g, \mu} \le m^+ + \sum_n \phi_n(g_n) < + \infty.
$$
Moreover, for all $i$,
\beas
&& \sum_{n=1}^N h^i_n (S^i_n  -S^i_{n-1}) -\varepsilon_i |h^i_n - h^i_{n-1}| S^i_{n-1}\\
&=& \sum_{n=1}^N \sum_{k=1}^n (h^i_k - h^i_{k-1}) (S^i_n  -S^i_{n-1}) -\varepsilon_i |h^i_n - h^i_{n-1}| S^i_{n-1}\\
&=& \sum_{k=1}^N (h^i_k - h^i_{k-1}) (S^i_N - S^i_{k-1}) -\varepsilon_i |h^i_k - h^i_{k-1}| S^i_{k-1}.
\eeas
Denote $\tilde{S}^i_n = \mathbb{E}^{\mu}[S^i_N \mid \mathcal{F}_n]$ and
$$
Y_n = \sum_{k=1}^n \sum_{i=1}^d (h^i_k - h^i_{k-1}) (\tilde{S}^i_n - S^i_{k-1}) -\varepsilon_i |h^i_k - h^i_{k-1}| S^i_{k-1}
\quad \mbox{with } Y_0 = 0.
$$
Then $Y_N = h(S)$, and if the conditional expectation is understood in the general sense of \cite{JS}, one has
\beas
 &&\mathbb{E}^{\mu}[Y_n \mid \mathcal{F}_{n-1}] - Y_{n-1} \\
 &&=\sum_{i=1}^d 
 \mathbb{E}^{\mu} \edg{(h^i_n - h^i_{n-1}) (\tilde{S}^i_n - S^i_{n-1}) -\varepsilon_i |h^i_n - h^i_{n-1}| S^i_{n-1} \mid 
\mathcal{F}_{n-1}}\\
&&= \sum_{i=1}^d (h^i_n - h^i_{n-1}) (\tilde{S}^i_{n-1}- S^i_{n-1}) -\varepsilon_i |h^i_n - h^i_{n-1}| S^i_{n-1} \le 0.
\eeas
So $Y_n$ is of the form $Y_n = M_n -A_n$, where $M_n$ is a generalized $\mu$-martingale starting at $0$
and $$A_n = \sum_{k=1}^n Y_{k-1} -\mathbb{E}^{\mu}[Y_k \mid \mathcal{F}_{k-1}]$$ a predictable increasing
process. Since $\mathbb{E}^{\mu} M^-_N \le \mathbb{E}^{\mu} Y^-_N = \mathbb{E}^{\mu} h(S)^- < + \infty$,
one obtains from \cite{JS} that $(M_n)$ is a true $\mu$-martingale. In particular, 
$h(S) = M_N - A_N$ is $\mu$-integrable with $\mathbb{E}^{\mu} h(S) \le 0$. Therefore,
$$
m + \sum_n \phi_n(g_n) \ge m + \sum_n \ang{g_n, \nu_n} \ge \mathbb{E}^{\mu} \edg{m + \oplus g + h(S)} > 0,
$$
which shows that $(m,\oplus g,h)$ cannot be a model-independent arbitrage.

Finally, let us assume (ii). Then it follows as in the proof of Corollary \ref{cor:lintransport} that
$\phi$ is a real-valued increasing convex functional on $B(\mathcal{P})$ such that $\phi(0) = 0$
and $\phi(f) \leq m+\sum_n \phi_n(g_n)$ whenever $f \le m + \oplus g$ for some $m \in \mathbb{R}$ 
and $\oplus g \in G(\mathcal{P})$.
Moreover, if
$$
m + \oplus g + h(S) \ge f
$$
for a strategy $(m, \oplus g, h) \in \Theta$ and $f \in B(\mathcal{P})$, one has for all $\mu \in \mathcal{M}(\mathcal{P})$,
$$
\mathbb{E}^{\mu} h(S)^- \le m^+ + \sum_n \ang{g_n, \nu_n} + \ang{f^-, \mu}
\leq m^+ + \sum_n \phi_n(g_n) + \ang{f^-,\mu} < + \infty.
$$
So it follows as above that $\mathbb{E}^{\mu} h(S) \le 0$, and therefore, $m + \sum_n \phi_n(g_n) \ge \ang{f,\mu}$.
This implies that $\phi(f) \ge \ang{f,\mu}$, and consequently,
$\phi^*_{C_b}(\mu) =\phi^*_{U_b}(\mu) = 0$ for all $\mu \in \mathcal{M}(\mathcal{P})$. It remains to show that
\begin{equation}\label{eq11}
\phi^\ast_{C_b}(\mu) = +\infty\quad\mbox{for }\mu\in \mathcal{P} \setminus \mathcal{M}(\mathcal{P}).
\end{equation}
Then Proposition \ref{prop:sub} implies \eqref{rep4}, and thereby also (iii).

To show \eqref{eq11}, fix $\mu \in \mathcal{P}$. If 
$$
(1-\varepsilon_i) s^i_0 \le \mathbb{E}^{\mu} x^i_N \le (1+\varepsilon_i) s^i_0
$$
as well as
$$
\mathbb{E}^{\mu}[v(x_1, \dots, x_n) (x^i_N - (1+ \varepsilon_i) x^i_n)] \le 0 
\quad \mbox{and} \quad
\mathbb{E}^{\mu}[v(x_1, \dots, x_n) ((1-\varepsilon_i) x^i_n - x^i_N)] \le 0,
$$
for all $i$, $N$, $n\leq N$ and every bounded continuous function $v : \prod_{j=1}^n X_j \to \mathbb{R}_+$, then 
$$
(1-\varepsilon_i) S^i_n \le \mathbb{E}^{\mu}[S^i_N \mid \mathcal{F}_n] \le (1+\varepsilon_i) S^i_n \quad 
\mbox{for all } i,N \mbox{ and } n\leq N.
$$
So for $\mu \in \mathcal{P} \setminus \mathcal{M}(\mathcal{P})$, there exists an $f$ 
with $\ang{f,\mu} > 0$, where $f$ is of the form $f(x) = x^i_N - (1+\varepsilon_i) s^i_0$,
$f(x) = (1-\varepsilon_i) s^i_0 - x^i_N$, $f(x) = v(x_1, \dots, x_n) (x^i_N - (1+ \varepsilon_i) x^i_n)$
or $f(x) = v(x_1, \dots, x_n) ((1- \varepsilon_i) x^i_n - x^i_N)$ for a bounded continuous function 
$v: \prod_{j=1}^n X_j \to \mathbb{R}_+$. For $k \in \mathbb{N}$, define $f^k := f \wedge k$ and
$f^k_k := f^k \vee (-k)$. By monotonicity, one has $\phi(f^k) \le \phi(f) \le 0$. Moreover,
$$
f^k_k(x) = f^k(x) + (k+f(x))^- \le f^k(x) + (c|x^i_n| - k/2)^+ + (c|x^i_N| - k/2)^+,
$$
for $c \in \mathbb{R}_+$ large enough. Since $w^k(x) := (c|x^i_n| - k/2)^+ + (c|x^i_N| - k/2)^+$ belongs to
$G(\mathcal{P})$ one gets
$$
\phi(w^k) \le \phi_n( (c|x_n| - k/2)^+ ) + \phi_N((c|x_N| - k/2)^+ ) \to 0 \quad \mbox{for } k \to +\infty
$$
by our assumption on $\mathcal{P}_n$. So for $k$ large enough, one has
$$
\ang{f^k_k,\mu} - \phi(f^k_k) \ge \ang{f^k,\mu} - \phi(f^k) - \phi(w^k) \ge 
\ang{f^k,\mu} - \phi(w^k) > 0,
$$
and therefore, $\phi^*_{C_b}(\mu) = + \infty$. 
\end{proof}

\bibliographystyle{amsplain}

\end{document}